\begin{document}
\newcommand{\B}{\mathcal{B}}
\newcommand{\Cset}{\mathcal{D}}
\newtheorem{theorem}{Theorem}
\newtheorem{lemma}[theorem]{Lemma}
\newtheorem{corollary}[theorem]{Corollary}
\newtheorem{conjecture}[theorem]{Conjecture}
\newtheorem{example}[theorem]{Example}
\newtheorem{remark}[theorem]{Remark}
\newtheorem{definition}[theorem]{Definition}
\newtheorem{notation}[theorem]{Notation}
\newtheorem{summary}[theorem]{Summary}
\newtheorem{proposition}[theorem]{Proposition}

\newcommand\Zp{\makebox[.75em]{$1$}}
\newcommand\Zm{\makebox[.75em]{$-$}}
\newcommand\Zz{\makebox[.75em]{$0$}}
\newcommand\Zi{\makebox[.75em]{$i$}}
\newcommand\Zj{\makebox[.75em]{$j$}}

\title{Unbiased complex Hadamard matrices and bases}

\renewcommand{\thefootnote}{\fnsymbol{footnote}}

\author{Darcy Best and Hadi Kharaghani\footnotemark\\
Department of Mathematics \& Computer Science\\
University of Lethbridge\\
Lethbridge, Alberta, T1K 3M4\\
Canada\\
email: darcy.best@uleth.ca, kharaghani@uleth.ca }

\maketitle

\footnotetext[1]{Supported by an NSERC
Discovery Grant - Group.}

\begin{abstract}
We introduce mutually unbiased complex Hadamard ({\bf MUCH})
matrices  and show that the number of {\bf MUCH} matrices  of
order $2n$, $n$ odd, is at most $2$ and the  bound is attained for
$n=1,5,9$. Furthermore, we prove that certain pairs of mutually
unbiased complex Hadamard matrices of order $m$ can be used to
construct  pairs of unbiased real Hadamard matrices of
order $2m$. As a consequence we generate a new pair of unbiased
real Hadamard matrices of order $36$.
\end{abstract}

\noindent {\bf AMS Subject Classification:} Primary 05B20.

\noindent {\bf Keywords:}
complex Hadamard matrices, real Hadamard matrices, unbiased
Hadamard matrices, unbiased bases

\section{Preliminaries}

A {\it complex Hadamard} matrix is a matrix $H$ of order $n$ with
entries in $\{-1,1,i,-i\}$ and orthogonal rows in the usual
complex inner product on ${\mathbb{C}}^n$. If the entries of the
matrix consist of only $\pm 1$, we call the matrix a real Hadamard
matrix or a Hadamard matrix for short. Our main references for
complex and real Hadamard matrices are  \cite {ys,t}. Two complex
Hadamard matrices $H$ and $K$ of order $2n$ are called {\it
unbiased} if $HK^{*}=L$, where $K^*$ denotes the Hermitian
transpose of $K$ and all the entries of the matrix $L$ are of the
absolute value $\sqrt {2n}$. In this case, it follows that
$2n=a^2+b^2$, where $a$, $b$ are nonnegative integers. While there
has been a lot of interest in the class of mutually unbiased
unimodular complex Hadamard matrices, where the entries of
the matrices consist of unimodular complex numbers, see \cite
{cs,gr,wb} for details, it is only recently that some interest has been shown in
the existence and applications of mutually unbiased real Hadamard
matrices, see \cite {lwo}. Our aim in this paper is to concentrate
on matrices of order $2n$, $n$ odd, with entries in
$\{-1,1,i,-i\}$. We will find an upper bound for the number of
mutually unbiased complex Hadamard matrices of order $2n$, $n$
odd, denoted $|{\bf MUCH}(2n)|$, in the next section. We also
 report on the outcome of a computer search for maximal
classes of {\bf MUCH} matrices of orders $10$ and $18$. Section 3
is devoted to the study of unbiased real Hadamard matrices. We will
briefly discuss mutually unbiased bases in the last section. In
the presentation of matrices we use $j$ to denote $-i$ and $-$ to
denote $-1$.

\section{Unbiased complex Hadamard matrices}

Dealing with complex  matrices, i.e. matrices with entries  in
$\{-1,1,i,-i\}$, is quite different from working
with the unimodular complex matrices as the powerful character
theory is no longer applicable.
We begin this section with a well known, but important property
of complex Hadamard matrices.

\begin{lemma}\label{rregular}
Let $H=[h_{ij}]$ be a complex Hadamard matrix of order $n$
for which the absolute value of the row sums are all identical
and equal to {\bf r}. Then ${\bf r}=\sqrt n$.
\end{lemma}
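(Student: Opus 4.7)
The plan is to compute the squared norm of the vector of row sums in two different ways. Let $\mathbf{1}$ denote the all-ones column vector in $\mathbb{C}^n$ and set $v = H\mathbf{1}$, so that the $k$-th entry of $v$ is precisely the $k$-th row sum $s_k$ of $H$.

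On the one hand, the hypothesis gives $|v_k|^2 = |s_k|^2 = \mathbf{r}^2$ for every $k$, and therefore
\[
v^{*}v \;=\; \sum_{k=1}^{n} |s_k|^2 \;=\; n\mathbf{r}^2.
\]
On the other hand, the row-orthogonality of $H$ yields $HH^{*} = nI$, and since $H$ is square this also gives $H^{*}H = nI$. Consequently,
\[
v^{*}v \;=\; \mathbf{1}^{*}H^{*}H\mathbf{1} \;=\; n\,\mathbf{1}^{*}\mathbf{1} \;=\; n^{2}.
\]

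Comparing the two expressions yields $n\mathbf{r}^2 = n^2$, whence $\mathbf{r} = \sqrt{n}$. There is essentially no obstacle here: the only subtle point is the passage from $HH^{*}=nI$ to $H^{*}H=nI$, which follows immediately from the fact that $H$ is square and invertible (so a left inverse is a right inverse). Everything else is a one-line double-counting of $\|H\mathbf{1}\|^{2}$.
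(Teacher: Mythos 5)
Your proposal is correct and follows essentially the same argument as the paper: compute $\|H\mathbf{1}\|^2$ once via the hypothesis on the row sums and once via $H^{*}H=nI$, then compare. The paper's proof is identical in substance (it writes $\mathbf{e}$ for $\mathbf{1}$), and your remark about passing from $HH^{*}=nI$ to $H^{*}H=nI$ is a small but legitimate point the paper leaves implicit.
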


\begin{proof}
For ${\bf e}$ being the all ones vector, we have
 $(H{\bf e})^*(H{\bf e}) = {\bf e}^* H^* H {\bf e} = {\bf e}^*nI{\bf e} =  n {\bf e}^*{\bf e} = n^2$. So, $\sum_{i=1}^n|r_i|^2$ $=n^2$, where
$r_i=\sum_{j=1}^nh_{ij}$, $1\le i\le n$. It follows that ${\bf
r}=\sqrt n$.
\end{proof}

A complex Hadamard matrix of order $n$ for which the absolute value of the row
sums are all equal to $\sqrt n$ is called {\it row regular}. It follows
from Lemma \ref{rregular} that for a row regular complex Hadamard
matrix $H=[h_{kj}]$ of order $2n$, $n$ odd, if $\sum_{j=1}^{2n}
h_{kj}=a+ib$, for some $k$, $1\le k \le 2n$, then $a^2+b^2=2n$ and so both $|a|$
and $|b|$ are odd integers.

\begin{lemma}\label{main3}
There is no pair of unbiased row regular complex Hadamard matrices
of order $2n$, $n$ odd.
\end{lemma}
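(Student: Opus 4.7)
The plan is to work modulo $2$ in the Gaussian integers $\mathbb{Z}[i]$ and extract a parity contradiction. Suppose $H$ and $K$ are row regular complex Hadamard matrices of order $2n$, $n$ odd, and set $L = HK^{*}$ with $|L_{ij}| = \sqrt{2n}$ for all $i,j$. Two parity observations drive the argument. First, each row sum $a+ib$ of $H$ satisfies $a^{2}+b^{2} = 2n \equiv 2 \pmod 4$, so $a$ and $b$ are both odd; writing $a = p - q$ where $p$ and $q$ count the $+1$ and $-1$ entries of the row, $p - q$ odd forces $p + q$ odd, i.e., the number of real entries in every row of $H$ is odd. The same holds for $K$. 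Second, each $L_{ij} = P_{ij} + iQ_{ij}$ satisfies $P_{ij}^{2} + Q_{ij}^{2} = 2n$, so $P_{ij}$ and $Q_{ij}$ are likewise both odd.

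The main step is to reduce the identity $L = HK^{*}$ entrywise modulo $2$ in $\mathbb{Z}[i]$. Under this reduction $\pm 1 \mapsto 1$ and $\pm i \mapsto i$, so a product $h_{i\ell}\,\overline{k_{j\ell}}$ is congruent to $1$ precisely when $h_{i\ell}$ and $k_{j\ell}$ are either both real or both imaginary, and congruent to $i$ otherwise. If $\alpha_{ij}$ denotes the number of indices $\ell$ for which this real/imaginary agreement occurs, then $L_{ij} \equiv \alpha_{ij} + (2n - \alpha_{ij})\,i \pmod 2$. Combined with $L_{ij} \equiv 1 + i \pmod 2$ coming from $P_{ij}, Q_{ij}$ both odd, this forces $\alpha_{ij}$ to be odd for every pair $i,j$.

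To close, let $r_{i}^{H}, r_{j}^{K} \in \{0,1\}^{2n}$ be the characteristic vectors of the real positions in row $i$ of $H$ and row $j$ of $K$. Then $\alpha_{ij}$ is the number of coordinates in which $r_{i}^{H}$ and $r_{j}^{K}$ agree, so $\alpha_{ij} = 2n - d(r_{i}^{H}, r_{j}^{K})$ for $d$ the Hamming distance. Since $2n$ is even, $\alpha_{ij}$ odd is equivalent to $d(r_{i}^{H}, r_{j}^{K})$ odd, which is in turn equivalent to $w(r_{i}^{H}) + w(r_{j}^{K})$ being odd. But by the first parity observation both of these weights are odd, so their sum is even, contradicting the conclusion of the previous paragraph.

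The only delicate point I anticipate is verifying that the mod-$2$ reduction of the matrix identity $HK^{*}=L$ faithfully extracts the real/imaginary pattern of the two matrices; once that reduction is in hand, the rest of the argument is elementary bookkeeping with Hamming weights, and it uses the hypothesis $n$ odd only through the single fact $2n \equiv 2 \pmod 4$.
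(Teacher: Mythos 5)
Your proof is correct, and while it rests on the same parity obstruction as the paper's, the mechanism is genuinely different. The paper forms the product $\frac{1}{1+i}(H+J)\,\frac{1}{1+i}(K^*+J)$, notes that both factors have entries in $\{0,1,-i,1-i\}$ so the product is a Gaussian-integer matrix, expands it as $\frac{-i}{2}(HK^*+HJ+JK^*+2nJ)$, and derives a contradiction because every entry of $HK^*+HJ+JK^*+2nJ$ has odd real and odd imaginary part (row regularity controls $HJ$ and $JK^*$, unbiasedness controls $HK^*$), hence is not divisible by $2$. You instead reduce $L=HK^*$ entrywise in $\mathbb{Z}[i]/(2)$, where $\{1,i\}$ is an $\mathbb{F}_2$-basis, observe that each term $h_{i\ell}\overline{k_{j\ell}}$ reduces to $1$ or $i$ according to whether the two entries have the same or different real/imaginary type, and turn the whole question into Hamming-weight bookkeeping on the indicator vectors of real positions: row regularity forces each row of $H$ and of $K$ to contain an odd number of real entries, so the agreement count $\alpha_{ij}$ is even and $L_{ij}\equiv 0\pmod 2$, contradicting $L_{ij}\equiv 1+i$. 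The two arguments consume identical inputs (the single fact $2n\equiv 2\pmod 4$ forces both coordinates of every relevant Gaussian integer to be odd), and your congruence $\alpha_{ij}\equiv w(r_i^H)+w(r_j^K)\pmod 2$ is exactly the mod-$2$ shadow of the paper's term $HJ+JK^*$. What the paper's formulation buys is reusability: isolating the quantity $HJ+JK^t$ modulo $4$ is what motivates the real analogue (Lemma \ref{r-unbiased}) and the later notions of weakly unbiased and modularly homogeneous matrices. What your formulation buys is self-containedness and transparency: there is no auxiliary matrix $J$, no division by $1+i$, and the combinatorial content (an odd number of real entries per row) is made explicit. The one point you flag as delicate -- that reduction mod $2$ respects the matrix identity -- is unproblematic, since reduction is a ring homomorphism on $\mathbb{Z}[i]$ commuting with conjugation, and $1$ and $i$ remain independent over $\mathbb{F}_2$ in the quotient.
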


\begin{proof}
Suppose on the contrary that there is a pair of row regular
complex Hadamard matrices $H$ and $K$ of order $2n$ such that
$HK^{*}=L$, where the entries of $L$ are of absolute value $\sqrt
{2n}$. Let $J$ be the matrix of all one entries of order $2n$.
Then the matrix
$${\frac{1}{1+i}}(H+J)\,{\frac{1}{1+i}}(K^*+J)$$
is a complex
integer matrix (i.e. all entries of the matrix consist of Gaussian
integers). To see this note that the entries of both matrices
$\frac{1}{1+i}(H+J)$ and $\frac{1}{1+i}(K^*+J)$
belong to the set $\{0,1, -i,1-i\}$. Observing
that
$$\frac{1}{1+i}(H+J)\,\frac{1}{1+i}(K^*+J)=
\frac{-i}{2}(HK^*+HJ+JK^*+2nJ)$$
and that all the entries of the matrices $HK^*$, $HJ$ and $JK^*$
consist of numbers of the form $x+iy$, where both $|x|$ and
$|y|$ are odd integers, we get a contradiction.
\end{proof}

Note that in the above proof we only use the fact
that all the entries of the matrices $HK^*$, $HJ$ and $JK^*$
consist of numbers of the form $x+iy$, where both $|x|$ and
$|y|$ are odd integers. So if there are
two complex Hadamard matrices $H$, $K$ of order $2n$,
$n$ odd, for which the row sums of $H$ and $K$
are all of the form $x+iy$, where both $|x|$ and
$|y|$ are odd integers, then none of the entries of $HK^*$ are of this form.
Consequently, such $H,K$ can not be unbiased, for the entries of  $HK^*=[H_{ij}]$,
$|L_{ij}|^2 = 2n$, which must be a sum of two odd squares.

\begin{theorem}\label{theorem:numberMUCHtwo}
For any odd integer $n$, $|{\bf MUCH}(2n)|\le 2$.
\end{theorem}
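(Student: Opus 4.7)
The plan is: (i) show that every complex Hadamard matrix of order $2n$ has all of its rows of the same parity type; (ii) extend the arithmetic in the proof of Lemma~\ref{main3} to rule out MUCH pairs of either parity type; and (iii) conclude by a pigeonhole argument on three matrices.

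For (i), I would let $p_r$ denote the number of entries in $\{\pm 1\}$ in row $r$ of a complex Hadamard matrix $H$ of order $2n$ (so the remaining $2n-p_r$ entries are in $\{\pm i\}$). The real and imaginary parts of the row sum of $r$ both have parity $p_r$, so that row sum is either \emph{doubly odd} ($p_r$ odd) or \emph{doubly even} ($p_r$ even). To show this parity is the same for every row, I would fix two rows $r,s$ and partition the $2n$ columns by whether the entry in each of $r,s$ is real or imaginary. The imaginary part of $\langle r,s\rangle$ is a signed sum of $\pm 1$'s over those columns where exactly one of $r,s$ is real, and its vanishing forces the number of such columns to be even. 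Since $p_r + p_s$ equals twice the number of columns where both rows are real plus the number where exactly one is real, we obtain $p_r\equiv p_s\pmod 2$. I would then call $H$ \emph{odd} if every $p_r$ is odd and \emph{even} otherwise.

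For (ii), I would revisit the proof of Lemma~\ref{main3}, which shows that for any two MUCH matrices $H,K$ of order $2n$, every entry of $HK^*+HJ+JK^*+2nJ$ must have both real and imaginary parts even. The remark following Lemma~\ref{main3} already excludes two odd MUCH matrices by a direct parity count. For two even MUCH matrices, the entries of $HK^*$ are still of the form $x+iy$ with $|x|,|y|$ odd (because $x^2+y^2=2n$ with $n$ odd), while the entries of $HJ$, $JK^*$, and $2nJ$ are now all doubly even, so the sum has doubly odd entries and we arrive at the same contradiction. Hence no pair of MUCH matrices can share a parity type.

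To finish, three pairwise unbiased complex Hadamard matrices of order $2n$ would, by pigeonhole on the two available parity types, contain two matrices of the same type, contradicting (ii). This yields $|\mathbf{MUCH}(2n)|\le 2$. The main obstacle is step (i), the parity dichotomy for rows of a single complex Hadamard matrix; once that is in place, step (ii) is a direct reprise of the Gaussian-integer arithmetic already performed in Lemma~\ref{main3}, and step (iii) is immediate.
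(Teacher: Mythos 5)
Your argument is correct, but it reaches the bound by a genuinely different route than the paper. The paper's proof rescales the columns of all three matrices simultaneously so that one of them has an all-ones first row; unbiasedness with that normalized matrix then forces the other two to be row regular, and Lemma~\ref{main3} (no unbiased pair of row regular complex Hadamard matrices of order $2n$, $n$ odd) supplies the contradiction. You instead establish an intrinsic invariant: orthogonality of any two rows forces the number of columns in which exactly one of the two entries is real to be even, so the parity of the count of real entries is constant across the rows of a single matrix, and every complex Hadamard matrix of order $2n$ is of exactly one of two parity types. Your step (ii) then runs the Gaussian-integer computation of Lemma~\ref{main3} in both parity classes --- the same computation the paper relies on, since a row regular matrix of order $2n$, $n$ odd, is exactly one of your ``odd'' matrices --- and concludes that an unbiased pair must consist of one matrix of each type, after which pigeonhole on three matrices finishes. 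What your approach buys is that it dispenses with the column normalization and makes explicit the invariant that the paper manipulates only implicitly; it also proves the slightly stronger statement that the two members of any unbiased pair of order $2n$, $n$ odd, necessarily have opposite parity types. The cost is the extra lemma in step (i), though its proof is a short orthogonality count. Both arguments ultimately rest on the same mod-$2$ arithmetic in $\mathbb{Z}[i]$, namely that the entries of $HK^*+HJ+JK^*+2nJ$ must have both coordinates even while those of $HK^*$ must have both coordinates odd.
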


\begin{proof}
Suppose on the contrary that there are more than two {\bf MUCH}
matrices of order $2n$. By multiplying the columns of all matrices
by appropriate numbers we can make the first row of one of the
matrices to be all equal to one. The new matrices form a set of
{\bf MUCH} matrices which contain at least two row regular
Hadamard matrices of order $2n$, contradicting Lemma \ref{rregular}
and thus the result follows.
\end{proof}

\begin{example}
Let $$
H = \left(\begin{array}{cc}
1 & 1\\
1 & -
\end{array}\right), \quad
K = \left(\begin{array}{cc}
1 & i\\
i & 1
\end{array}\right).
$$
Then $$HK^*=\left(\begin{array}{cc}
1-i & 1-i\\
i+1 & -i-1
\end{array}\right).$$
This shows the inequality in the Theorem \ref{theorem:numberMUCHtwo}
is sharp for $n=1$.
\end{example}

We have  conducted a computer search and found many maximal sets
of {\bf MUCH} matrices of orders $10$ and $18$. One representative from
each of these pairs of matrices is listed below in Tables \ref{matC10}
and \ref{matC18}.

\begin{table}[htbp!]
\centering
\caption{A pair $H$, $K$ of unbiased complex Hadamard matrices of order 10}
\label{matC10}
$$
\left(
\begin{array}{c}
 \Zp\Zp\Zp\Zp\Zp\Zp\Zp\Zp\Zp\Zp\\
 \Zp\Zp\Zm\Zm\Zm\Zi\Zj\Zp\Zm\Zp\\
 \Zp\Zm\Zp\Zm\Zm\Zj\Zm\Zi\Zp\Zp\\
 \Zp\Zm\Zm\Zp\Zm\Zj\Zp\Zp\Zi\Zm\\
 \Zp\Zm\Zm\Zm\Zp\Zi\Zp\Zm\Zp\Zj\\
 \Zp\Zj\Zi\Zi\Zj\Zm\Zi\Zj\Zj\Zi\\
 \Zp\Zi\Zm\Zp\Zp\Zj\Zm\Zm\Zm\Zp\\
 \Zp\Zp\Zj\Zp\Zm\Zi\Zm\Zm\Zp\Zm\\
 \Zp\Zm\Zp\Zj\Zp\Zi\Zm\Zp\Zm\Zm\\
 \Zp\Zp\Zp\Zm\Zi\Zj\Zp\Zm\Zm\Zm
\end{array}
\right), \qquad
\left(
\begin{array}{c}
 \Zj\Zm\Zm\Zp\Zp\Zp\Zp\Zj\Zi\Zp\\
 \Zj\Zp\Zp\Zp\Zi\Zm\Zj\Zp\Zi\Zj\\
 \Zp\Zj\Zi\Zj\Zp\Zm\Zp\Zi\Zp\Zi\\
 \Zp\Zi\Zj\Zp\Zj\Zj\Zm\Zp\Zp\Zi\\
 \Zi\Zp\Zm\Zi\Zj\Zi\Zp\Zp\Zp\Zj\\
 \Zi\Zp\Zi\Zj\Zp\Zp\Zm\Zp\Zm\Zp\\
 \Zp\Zi\Zp\Zp\Zp\Zi\Zi\Zm\Zj\Zj\\
 \Zm\Zj\Zp\Zp\Zm\Zp\Zi\Zi\Zp\Zp\\
 \Zp\Zp\Zj\Zm\Zi\Zp\Zj\Zm\Zp\Zp\\
 \Zm\Zp\Zp\Zi\Zp\Zj\Zp\Zj\Zj\Zi
\end{array}
\right)
$$
\end{table}

\begin{table}[t!]
\centering
\caption{A pair $H$, $K$ of unbiased complex Hadamard matrices of order 18}
\label{matC18}
$$
\left(
\begin{array}{c}
 \Zp\Zp\Zp\Zp\Zp\Zp\Zp\Zp\Zp\Zp\Zp\Zp\Zp\Zp\Zp\Zp\Zp\Zp\\
 \Zp\Zm\Zj\Zj\Zi\Zj\Zi\Zj\Zj\Zj\Zj\Zi\Zi\Zi\Zj\Zi\Zi\Zi\\
 \Zp\Zj\Zm\Zj\Zi\Zi\Zi\Zi\Zj\Zi\Zj\Zi\Zj\Zj\Zi\Zi\Zj\Zj\\
 \Zp\Zj\Zj\Zm\Zi\Zi\Zj\Zi\Zi\Zj\Zi\Zi\Zj\Zi\Zj\Zj\Zj\Zi\\
 \Zp\Zi\Zi\Zi\Zm\Zj\Zi\Zi\Zj\Zj\Zi\Zi\Zi\Zj\Zj\Zj\Zj\Zj\\
 \Zp\Zj\Zi\Zi\Zj\Zm\Zi\Zi\Zi\Zj\Zj\Zj\Zj\Zj\Zj\Zi\Zi\Zi\\
 \Zp\Zi\Zi\Zj\Zi\Zi\Zm\Zj\Zj\Zj\Zi\Zj\Zj\Zj\Zi\Zj\Zi\Zi\\
 \Zp\Zj\Zi\Zi\Zi\Zi\Zj\Zm\Zj\Zi\Zj\Zj\Zi\Zi\Zj\Zj\Zi\Zj\\
 \Zp\Zj\Zj\Zi\Zj\Zi\Zj\Zj\Zm\Zj\Zi\Zi\Zi\Zj\Zi\Zi\Zi\Zj\\
 \Zp\Zj\Zi\Zj\Zj\Zj\Zj\Zi\Zj\Zm\Zi\Zj\Zi\Zi\Zi\Zi\Zj\Zi\\
 \Zp\Zj\Zj\Zi\Zi\Zj\Zi\Zj\Zi\Zi\Zm\Zj\Zi\Zj\Zi\Zj\Zj\Zi\\
 \Zp\Zi\Zi\Zi\Zi\Zj\Zj\Zj\Zi\Zj\Zj\Zm\Zj\Zi\Zi\Zi\Zj\Zj\\
 \Zp\Zi\Zj\Zj\Zi\Zj\Zj\Zi\Zi\Zi\Zi\Zj\Zm\Zj\Zj\Zi\Zi\Zj\\
 \Zp\Zi\Zj\Zi\Zj\Zj\Zj\Zi\Zj\Zi\Zj\Zi\Zj\Zm\Zi\Zj\Zi\Zi\\
 \Zp\Zj\Zi\Zj\Zj\Zj\Zi\Zj\Zi\Zi\Zi\Zi\Zj\Zi\Zm\Zj\Zi\Zj\\
 \Zp\Zi\Zi\Zj\Zj\Zi\Zj\Zj\Zi\Zi\Zj\Zi\Zi\Zj\Zj\Zm\Zj\Zi\\
 \Zp\Zi\Zj\Zj\Zj\Zi\Zi\Zi\Zi\Zj\Zj\Zj\Zi\Zi\Zi\Zj\Zm\Zj\\
 \Zp\Zi\Zj\Zi\Zj\Zi\Zi\Zj\Zj\Zi\Zi\Zj\Zj\Zi\Zj\Zi\Zj\Zm
\end{array}
\right),\quad
\left(
\begin{array}{c}
 \Zp\Zi\Zi\Zj\Zj\Zj\Zj\Zi\Zi\Zj\Zm\Zp\Zj\Zp\Zp\Zj\Zp\Zm\\
 \Zp\Zi\Zi\Zj\Zi\Zj\Zi\Zj\Zm\Zi\Zi\Zp\Zp\Zm\Zp\Zi\Zj\Zp\\
 \Zp\Zi\Zi\Zi\Zi\Zi\Zp\Zp\Zj\Zm\Zj\Zi\Zj\Zp\Zj\Zp\Zm\Zi\\
 \Zp\Zj\Zi\Zi\Zi\Zp\Zj\Zi\Zi\Zp\Zp\Zp\Zi\Zj\Zm\Zi\Zj\Zm\\
 \Zp\Zj\Zi\Zm\Zj\Zj\Zp\Zj\Zj\Zp\Zp\Zm\Zi\Zj\Zp\Zj\Zi\Zi\\
 \Zp\Zi\Zj\Zm\Zm\Zi\Zj\Zj\Zp\Zi\Zj\Zj\Zp\Zj\Zi\Zp\Zp\Zj\\
 \Zp\Zi\Zj\Zp\Zp\Zi\Zi\Zm\Zi\Zj\Zi\Zm\Zi\Zj\Zj\Zp\Zp\Zi\\
 \Zj\Zm\Zp\Zi\Zj\Zp\Zi\Zi\Zp\Zi\Zi\Zi\Zj\Zj\Zp\Zi\Zi\Zj\\
 \Zi\Zp\Zp\Zj\Zi\Zp\Zi\Zi\Zj\Zj\Zj\Zj\Zp\Zj\Zj\Zm\Zi\Zj\\
 \Zp\Zj\Zp\Zj\Zj\Zi\Zm\Zp\Zj\Zp\Zm\Zi\Zi\Zi\Zj\Zp\Zj\Zj\\
 \Zi\Zm\Zj\Zi\Zp\Zj\Zj\Zi\Zj\Zi\Zi\Zj\Zp\Zp\Zj\Zj\Zj\Zp\\
 \Zp\Zj\Zp\Zp\Zm\Zi\Zi\Zi\Zm\Zi\Zp\Zj\Zj\Zi\Zi\Zj\Zp\Zi\\
 \Zj\Zp\Zj\Zj\Zi\Zm\Zj\Zi\Zj\Zp\Zi\Zi\Zp\Zp\Zi\Zi\Zi\Zi\\
 \Zp\Zj\Zm\Zp\Zm\Zp\Zj\Zj\Zp\Zj\Zi\Zj\Zj\Zi\Zj\Zi\Zi\Zp\\
 \Zp\Zp\Zj\Zi\Zj\Zj\Zi\Zm\Zj\Zj\Zj\Zp\Zm\Zp\Zi\Zi\Zj\Zp\\
 \Zp\Zp\Zj\Zi\Zi\Zj\Zm\Zp\Zi\Zm\Zp\Zi\Zi\Zi\Zp\Zj\Zi\Zj\\
 \Zi\Zj\Zp\Zp\Zi\Zm\Zp\Zj\Zp\Zj\Zi\Zi\Zj\Zj\Zi\Zj\Zj\Zj\\
 \Zj\Zi\Zi\Zi\Zp\Zi\Zp\Zp\Zj\Zj\Zi\Zj\Zi\Zi\Zi\Zm\Zp\Zj
\end{array}
\right)
$$
\end{table}

We believe that the upper bound in Theorem
\ref{theorem:numberMUCHtwo} is sharp for every odd integer  $n$ for
which $2n$ is the order of a row regular complex Hadamard matrix.
The following conjecture includes this and a conjecture regarding
the existence of row regular complex Hadamard matrices.

\begin{conjecture}
$|{\bf MUCH}(2n)|=2$ for all odd integers $n$, where $2n$ is a
sum of two squares.
\end{conjecture}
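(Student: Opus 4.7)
The plan is to leverage Theorem \ref{theorem:numberMUCHtwo}, which already supplies the upper bound $|\mathbf{MUCH}(2n)|\le 2$, and direct all effort at the matching lower bound: for each odd $n$ with $2n=a^{2}+b^{2}$, to exhibit a pair of unbiased complex Hadamard matrices of order $2n$. The hypothesis $2n=a^{2}+b^{2}$ is automatically necessary, since every entry of $HK^{*}$ is a Gaussian integer of squared modulus $2n$; and since $n$ is odd, both $a$ and $b$ are forced to be odd.

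A useful preliminary reduction: given a \textbf{MUCH} pair $(H,K)$ and a diagonal unitary matrix $D$ chosen so that $HD$ has all-ones first row, one has $(HD)(KD)^{*}=HK^{*}$, so we may assume $H$ is normalized in this way. The first row of $HK^{*}$ is then the conjugate of the row-sum vector of $K$, so the unbiased hypothesis forces $K$ to be row regular in the sense of Lemma \ref{rregular}. The conjecture therefore reduces to producing, for each admissible $n$, (i) a row regular complex Hadamard matrix $K$ of order $2n$, and (ii) a complex Hadamard matrix $H$ of order $2n$ unbiased with $K$. The examples in Tables \ref{matC10} and \ref{matC18} suggest a highly structured form for the first matrix: a bordered Paley-type skeleton with $1$'s on the first row and column, $-1$'s on the diagonal, and $\pm i$ entries off the diagonal patterned by the quadratic residues of an odd prime $p\equiv 1\pmod 4$ dividing $n$. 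A first attempt would be to prove that such an $H$ is always a complex Hadamard matrix at the appropriate orders, and then to search, via a Gaussian-integer lattice argument, for a row regular $K$ whose inner products with the rows of $H$ all have modulus $\sqrt{2n}$.

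The principal obstacle is sub-problem (i). The existence of row regular complex Hadamard matrices of order $2n$ for odd $n$ is an open problem in its own right, and the conjecture implicitly contains it as a sub-conjecture. Even supposing such a $K$ is in hand, matching it with an unbiased $H$ appears to require delicate algebraic compatibility governed by the factorization of $a+bi$ in $\mathbb{Z}[i]$, and I see no route to a uniform construction at present. I would therefore pair the structural attack above with a targeted computer search at the next admissible orders $2n=26, 34, 50, 58,\ldots$, looking for a common symmetry group among the successful pairs that might then be abstracted into a general recipe.
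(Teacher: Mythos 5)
This statement is a \emph{conjecture} in the paper, and the paper offers no proof of it: the authors only establish the upper bound $|{\bf MUCH}(2n)|\le 2$ (Theorem \ref{theorem:numberMUCHtwo}) and verify that it is attained for $n=1,5,9$ via the explicit examples of orders $2$, $10$ and $18$. So there is no proof in the paper to compare yours against, and your proposal, as you yourself acknowledge, is a research plan rather than a proof. The parts you do argue are sound: the normalization $(H,K)\mapsto(HD,KD)$ preserves $HK^{*}$, and once $H$ has an all-ones first row the first row of $HK^{*}$ consists of the conjugated row sums of $K$, so the unbiased condition forces $K$ to be row regular --- this matches the paper's own remark that row regularity is a necessary condition, and that $2n=a^{2}+b^{2}$ with $a,b$ odd is in turn necessary for row regularity (Lemma \ref{rregular}).

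The genuine gap is the entire lower bound. Your sub-problem (i), the existence of a row regular complex Hadamard matrix of order $2n$ for general odd $n$ with $2n$ a sum of two squares, is itself stated by the authors as an open sub-conjecture, and sub-problem (ii), finding an unbiased mate, is harder still; the paper resolves neither beyond finite computation at orders $10$ and $18$. Your proposed Paley-type skeleton is a reasonable guess for the structured matrix in the tables, but you give no verification that it is Hadamard at the relevant orders, let alone that it admits an unbiased row regular partner, and the suggested computer search at orders $26,34,50,\ldots$ cannot settle infinitely many cases. In short: the upper bound half of the conjecture is already a theorem in the paper, and the existence half remains open both in the paper and in your proposal.
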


The existence of row regular Hadamard matrix is a necessary condition to have
two {\bf MUCH}'s (see the proof of Theorem 3). For matrices of size $2n$, $n$ odd,
the existence of a row regular Hadamard matrix is, in turn, conditioned by
existence of integers $a,b$ such that $2n = a^2 + b^2$ (see lemma 1).

\section{ Unbiased real Hadamard matrices}

Two Hadamard matrices $H$, $K$ of order $n$ are called unbiased,
if $HK^t=L$, where the absolute values of all entries of $L$ are
equal to $\sqrt n$. It follows that $L=\sqrt nA$, where $A$ is a
Hadamard matrix of order $n$. It is only recently that interest has
been shown
in unbiased
Hadamard matrices
\cite{bstw,wb} and some new applications have emerged \cite{lwo}.
Pairs of unbiased Hadamard matrices exist only in square orders,
as $L = HK^t$, with moduli of entries of $L$ equal to $\sqrt n$, is a matrix of integers.
It is known and easy to prove (as shown below) that the maximum
number of mutually unbiased Hadamard matrices of order $4n^2$, $n$
odd, does not exceed 2. Although Lemma 13 provides an upper bound for the number of what we
call weakly unbiased Hadamard matrices (see Definition. 7), unbiased Hadamard matrices of order
$4n$, $n$ an odd square, belong to this class (see Remark. 8). So Lemma 13
also applies to them. Until very recently no example for which
the upper bound $2$ is attained was known besides the trivial
example of Hadamard matrices of order $4$. The first non-trivial
example of unbiased Hadamard matrices of order $36$ is shown in
\cite {hko}. The approach in \cite{hko} was to use a database of
known Hadamard matrices of order $36$ to search for matrices with
unbiased mates. Interestingly, only a very small fraction of the
over 3 million known matrices of order $36$ which were tested  had
unbiased mates. In this section we show that some sets of {\bf
MUCH} matrices of order $2n$ can be used to generate sets of
mutually unbiased Hadamard matrices of order $4n$. Having found
pairs of {\bf MUCH} of order $18$, we have many pairs of mutually
unbiased Hadamard matrices of order $36$. We begin with a known
\cite {bstw} and simple lemma. Our motivation for including the
proof here will follow.

\begin{lemma}\label{r-unbiased}
There is no pair of unbiased row regular Hadamard matrices of
order $4n^2$, $n$ odd.
\end{lemma}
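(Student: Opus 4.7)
The plan is to follow the same strategy as Lemma~\ref{main3} but with the real scalar $2$ replacing the Gaussian factor $1+i$. Suppose for contradiction that $H$ and $K$ are row regular Hadamard matrices of order $4n^2$ (with $n$ odd) satisfying $HK^t = L$, where every entry of $L$ has absolute value $\sqrt{4n^2} = 2n$. Applying Lemma~\ref{rregular} to the real case, every row sum of $H$ and every row sum of $K$ equals $\pm 2n$. I would consider the two matrices
$$\tfrac{1}{2}(H+J) \quad\text{and}\quad \tfrac{1}{2}(K^t+J),$$
which, since $H$ and $K$ have $\pm 1$ entries, are $0/1$ matrices. Their product is therefore an integer matrix.

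The key computation is to expand the product and isolate the divisibility obstruction:
$$\tfrac{1}{2}(H+J)\,\tfrac{1}{2}(K^t+J) \;=\; \tfrac{1}{4}\bigl(HK^t + HJ + JK^t + 4n^2 J\bigr).$$
Row regularity of $H$ implies that every entry of $HJ$ is $\pm 2n$, and row regularity of $K$ implies the same for $JK^t$; the hypothesis that $H,K$ are unbiased gives the same for $HK^t$. So each of the three summands $HK^t$, $HJ$, $JK^t$ has every entry of the form $\pm 2n$, while the fourth summand $4n^2 J$ is divisible by $4$.

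Then I would reduce each entry modulo $4$. Since $n$ is odd, $2n \equiv 2 \pmod 4$, so $\pm 2n \equiv 2 \pmod 4$ regardless of sign. Consequently every entry of $HK^t + HJ + JK^t + 4n^2 J$ is congruent to $2+2+2+0 = 6 \equiv 2 \pmod 4$, so the matrix is not divisible by $4$, contradicting the fact that the left hand side is an integer matrix. The only place one might get tripped up is the sign-independence in the mod-$4$ step, but it is precisely the oddness of $n$ that makes all signs irrelevant, so no serious obstacle arises; this is why the statement is restricted to $4n^2$ with $n$ odd rather than arbitrary square orders.
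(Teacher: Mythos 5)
Your proposal is correct and follows essentially the same route as the paper: the same factorization $\tfrac{1}{2}(H+J)\,\tfrac{1}{2}(K^t+J)=\tfrac{1}{4}(HK^t+HJ+JK^t+4n^2J)$ with integer left-hand side, contradicted by every entry of the right-hand numerator being $\equiv 2+2+2+0 \equiv 2 \pmod 4$. The paper merely leaves the mod-$4$ step implicit (it is spelled out in the discussion immediately following the lemma), so you have simply made the published argument explicit.
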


\begin{proof}
Repeating the line of proof of Lemma \ref{main3}, we have
$$\frac{1}{2}(H+J)\frac{1}{2}(K^t+J)=\frac{1}{4}(HK^t+HJ+JK^t+4n^2J).$$
Noting that $HK^t=2nL$, where $L$ is a Hadamard matrix, we get a
contradiction to the fact that the left side of the above identity
is an integer matrix.
\end{proof}

A quick glance at the above proof reveals that
$HJ+JK^t+4n^2J\equiv 0 \pmod{4}$, if and only if $HJ+JK^t\equiv 0
\pmod{4}$. Assuming that $HJ+JK^t\equiv 0 \pmod{4}$, we get a
contradiction if we assume one (or equivalently all) of the entries of
$HK^t$ is equal to $2\pmod{4}$. This is our motivation for the
following definition.

\begin{definition}
Two Hadamard matrices $H$, $K$ of order $n$ are said to be weakly
unbiased, if  $ |\{|a_{ij}|: 1\le i\le n, 1\le j\le n \}|\le 2$,
and $HK^t=[a_{ij}]\equiv 2J\pmod{4}$.
\end{definition}

\begin{remark}
Note that for the unbiased Hadamard matrices $H$, $K$ of order $n$,
$ |\{|a_{ij}|: 1\le i\le n, 1\le j\le n \}|=1$, where
$HK^t=[a_{ij}]$. So weakly unbiased Hadamard matrices are the
natural extension of unbiased Hadamard matrices of order $4n$, $n$ an odd square.
\end{remark}

The following lemma is immediate,
using equality from the proof of Lemma \ref{r-unbiased}.

\begin{lemma}\label{lemma:no2mod4}
Let $H$, $K$ be Hadamard matrices of order $4n$ such that
$HJ+JK^t\equiv 0 \pmod{4}$. Then no entry of $HK^t$ is equal to
$2\pmod{4}$.
\end{lemma}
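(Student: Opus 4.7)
The plan is to mirror the factorization argument already used in the proof of Lemma \ref{r-unbiased}, adapted to the present setting of order $4n$ (rather than $4n^2$). First I would form the matrix product
$$\tfrac{1}{2}(H+J)\,\tfrac{1}{2}(K^{t}+J).$$
Since $H$ and $K$ have entries in $\{-1,1\}$, the two factors have entries in $\{0,1\}$, so the product is an integer matrix.

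Next I would expand the product as in Lemma \ref{r-unbiased}, giving
$$\tfrac{1}{2}(H+J)\,\tfrac{1}{2}(K^{t}+J)=\tfrac{1}{4}\bigl(HK^{t}+HJ+JK^{t}+4nJ\bigr).$$
Because the left-hand side is an integer matrix, the matrix inside the parentheses must be $\equiv 0\pmod{4}$ entrywise. The term $4nJ$ vanishes mod $4$, and by hypothesis $HJ+JK^{t}\equiv 0\pmod{4}$, so I am left with $HK^{t}\equiv 0\pmod{4}$.

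From here the conclusion is immediate: every entry of $HK^{t}$ is divisible by $4$, hence none can be $\equiv 2\pmod{4}$. There is essentially no obstacle; the only point to be careful about is the arithmetic with $4nJ$ in place of the $4n^{2}J$ that appeared in Lemma \ref{r-unbiased}, and the observation (already flagged in the paragraph following that lemma) that what really matters is the congruence of $HJ+JK^{t}$ modulo $4$, not the precise form of the trailing $J$-term. This is exactly why the author notes that the lemma is immediate from the displayed equality in the proof of Lemma \ref{r-unbiased}.
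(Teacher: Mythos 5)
Your proof is correct and is precisely the argument the paper intends: it invokes the displayed identity from the proof of Lemma \ref{r-unbiased} (with $4nJ$ in place of $4n^2J$), uses integrality of $\tfrac{1}{2}(H+J)\,\tfrac{1}{2}(K^t+J)$, and cancels the hypothesis $HJ+JK^t\equiv 0\pmod 4$ to get $HK^t\equiv 0\pmod 4$. No issues.
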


\begin{definition}
Two Hadamard matrices $H$, $K$ of the same order are called to  be
modularly homogeneous if $HJ+JK^t\equiv 0 \pmod{4}$.
\end{definition}

\begin{lemma}\label{mhh}
There is no pair $H$, $K$ of modularly homogeneous Hadamard  matrices
of order $4n$ for which $HK^t\equiv 2J\pmod{4}$.
\end{lemma}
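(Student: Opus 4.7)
The plan is to derive Lemma \ref{mhh} immediately from Lemma \ref{lemma:no2mod4}. The definition of modular homogeneity was introduced precisely so that the phrase ``$H,K$ are modularly homogeneous'' is nothing other than the hypothesis $HJ+JK^{t}\equiv 0\pmod{4}$ of Lemma \ref{lemma:no2mod4}. So the strategy is to invoke that lemma and compare its conclusion with the assumption $HK^{t}\equiv 2J\pmod{4}$.

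First I would argue by contradiction: suppose $H,K$ is a pair of modularly homogeneous Hadamard matrices of order $4n$ with $HK^{t}\equiv 2J\pmod{4}$. Since every entry of $J$ equals $1$, this says that every one of the $(4n)^{2}$ entries of $HK^{t}$ is congruent to $2\pmod{4}$.

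Next, I would apply Lemma \ref{lemma:no2mod4}. Its hypothesis is satisfied verbatim by the modular homogeneity of $H,K$, and its conclusion is that no entry of $HK^{t}$ can be $\equiv 2\pmod{4}$. As $4n\ge 4$ the matrix $HK^{t}$ is nonempty, so this directly contradicts the previous paragraph, finishing the proof.

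There is essentially no obstacle to overcome: the real content lives in Lemma \ref{lemma:no2mod4}, which itself comes from reading the identity
$$\tfrac{1}{2}(H+J)\,\tfrac{1}{2}(K^{t}+J)=\tfrac{1}{4}\bigl(HK^{t}+HJ+JK^{t}+4nJ\bigr)$$
modulo $4$ and using that the left-hand side is an integer matrix. Lemma \ref{mhh} simply repackages this observation in the form needed for the weakly unbiased framework of Definition~\ref{r-unbiased}ff., which will be used for the upper-bound argument alluded to in the introduction.
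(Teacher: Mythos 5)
Your proof is correct and is exactly the paper's argument: the paper's entire proof of this lemma is ``This follows from Lemma \ref{lemma:no2mod4}.'' You have simply spelled out the (immediate) deduction that modular homogeneity is verbatim the hypothesis of that lemma, whose conclusion forbids any entry of $HK^t$ from being $2\pmod 4$.
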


\begin{proof}
This follows from Lemma \ref{lemma:no2mod4}.
\end{proof}

\begin{remark}
The assumption that $H$ and $K$ are modularly homogeneous in
Lemma \ref{mhh} is essential.
The Hadamard matrices of order 12 in Table \ref{matw12}
are weakly unbiased, that is $HK^t\equiv 2J\pmod{4}$,
but not modularly homogeneous.
It is noteworthy that the number of entries with value $2$ or $6$
in $HK^t$ is not balanced as there are more $2$ entries than
$6$ entries.
\end{remark}

\begin{table}[htbp!]
\centering
\caption{A pair $H$, $K$ of weakly unbiased Hadamard matrices of order 12}
\label{matw12}
$$
\left(
\begin{array}{c}
\Zp\Zp\Zp\Zm\Zp\Zp\Zm\Zp\Zp\Zm\Zp\Zp\\
\Zp\Zp\Zp\Zp\Zm\Zp\Zp\Zm\Zp\Zp\Zm\Zp\\
\Zp\Zp\Zp\Zp\Zp\Zm\Zp\Zp\Zm\Zp\Zp\Zm\\
\Zp\Zm\Zm\Zp\Zp\Zp\Zm\Zp\Zp\Zp\Zm\Zm\\
\Zm\Zp\Zm\Zp\Zp\Zp\Zp\Zm\Zp\Zm\Zp\Zm\\
\Zm\Zm\Zp\Zp\Zp\Zp\Zp\Zp\Zm\Zm\Zm\Zp\\
\Zp\Zm\Zm\Zp\Zm\Zm\Zp\Zp\Zp\Zm\Zp\Zp\\
\Zm\Zp\Zm\Zm\Zp\Zm\Zp\Zp\Zp\Zp\Zm\Zp\\
\Zm\Zm\Zp\Zm\Zm\Zp\Zp\Zp\Zp\Zp\Zp\Zm\\
\Zp\Zm\Zm\Zm\Zp\Zp\Zp\Zm\Zm\Zp\Zp\Zp\\
\Zm\Zp\Zm\Zp\Zm\Zp\Zm\Zp\Zm\Zp\Zp\Zp\\
\Zm\Zm\Zp\Zp\Zp\Zm\Zm\Zm\Zp\Zp\Zp\Zp
\end{array}
\right), \qquad
\left(
\begin{array}{c}
\Zm\Zp\Zp\Zm\Zp\Zp\Zp\Zm\Zm\Zp\Zp\Zp\\
\Zp\Zm\Zp\Zp\Zm\Zp\Zm\Zp\Zm\Zp\Zp\Zp\\
\Zp\Zp\Zm\Zp\Zp\Zm\Zm\Zm\Zp\Zp\Zp\Zp\\
\Zp\Zm\Zm\Zm\Zp\Zp\Zp\Zp\Zp\Zm\Zp\Zp\\
\Zm\Zp\Zm\Zp\Zm\Zp\Zp\Zp\Zp\Zp\Zm\Zp\\
\Zm\Zm\Zp\Zp\Zp\Zm\Zp\Zp\Zp\Zp\Zp\Zm\\
\Zm\Zp\Zp\Zm\Zm\Zm\Zm\Zp\Zp\Zm\Zp\Zp\\
\Zp\Zm\Zp\Zm\Zm\Zm\Zp\Zm\Zp\Zp\Zm\Zp\\
\Zp\Zp\Zm\Zm\Zm\Zm\Zp\Zp\Zm\Zp\Zp\Zm\\
\Zm\Zm\Zm\Zp\Zm\Zm\Zp\Zm\Zm\Zm\Zp\Zp\\
\Zm\Zm\Zm\Zm\Zp\Zm\Zm\Zp\Zm\Zp\Zm\Zp\\
\Zm\Zm\Zm\Zm\Zm\Zp\Zm\Zm\Zp\Zp\Zp\Zm
\end{array}
\right)
$$
\end{table}

\begin{lemma}\label{w(n)}
Let $w(n)$ be the number of mutually weakly unbiased Hadamard
matrices of order $4n$, $n$ odd, then $w(n)\le 2$.
\end{lemma}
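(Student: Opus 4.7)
The plan is to assume for contradiction that there exist three mutually weakly unbiased Hadamard matrices $H_1, H_2, H_3$ of order $4n$ and extract from them a modularly homogeneous pair, thereby contradicting Lemma \ref{mhh}. The key is that modular homogeneity is governed by a binary invariant attached to each matrix, so among three matrices two invariants must always coincide.

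First I would establish the following row-sum congruence: for any Hadamard matrix $H$ of order $4n$, all of its row sums are congruent modulo $4$ to a single value $\alpha_H \in \{0, 2\}$. Indeed, given two rows of $H$ with sums $r_i, r_j$, let $p$ be the number of columns where both rows carry $+1$. Orthogonality of these rows, together with a direct count of the four agreement/disagreement cell types, yields $r_i + r_j = 4p - 4n \equiv 0 \pmod 4$, hence $r_j \equiv -r_i \pmod 4$. Since $-0 \equiv 0$ and $-2 \equiv 2 \pmod 4$, every row sum sits in the same residue class, which we may call $\alpha_H \in \{0, 2\}$.

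Next I would translate modular homogeneity into this invariant. Since $(HJ)_{ij}$ equals the $i$-th row sum of $H$ and $(JK^t)_{ij}$ equals the $j$-th row sum of $K$, every entry of $HJ + JK^t$ has the form $r_i^H + r_j^K$. Thus a pair $H, K$ of order $4n$ Hadamard matrices is modularly homogeneous precisely when $\alpha_H + \alpha_K \equiv 0 \pmod 4$; because $\alpha_H, \alpha_K \in \{0, 2\}$, this is equivalent to $\alpha_H = \alpha_K$.

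Finally, by pigeonhole applied to the triple $\alpha_{H_1}, \alpha_{H_2}, \alpha_{H_3} \in \{0, 2\}$, there exist indices $i \neq j$ with $\alpha_{H_i} = \alpha_{H_j}$. The pair $H_i, H_j$ is then modularly homogeneous, while mutual weak unbiasedness supplies $H_i H_j^t \equiv 2J \pmod 4$, directly contradicting Lemma \ref{mhh}. The only substantive step is the row-sum congruence, which is a short consequence of row orthogonality; once it is in hand, the reduction to Lemma \ref{mhh} via pigeonhole on a two-element set is immediate, so no part of the argument appears genuinely hard.
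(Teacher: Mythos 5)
Your proof is correct, but it reaches the contradiction by a different mechanism than the paper does. The paper first negates columns of all the matrices simultaneously (an operation that leaves every product $H_iH_j^t$ unchanged), so that one matrix, say $M$, acquires an all-ones row; the relations $HM^t\equiv 2J$ and $KM^t\equiv 2J\pmod 4$ then force every row sum of the two remaining matrices $H$, $K$ to be $\equiv 2\pmod 4$, whence $HJ+JK^t\equiv 4J\equiv 0\pmod 4$ and Lemma \ref{lemma:no2mod4} applies. You instead observe that modular homogeneity is controlled by an invariant intrinsic to each single Hadamard matrix --- the common residue $\alpha_H\in\{0,2\}$ of its row sums modulo $4$, which your orthogonality count $r_i+r_j=4p-4n$ correctly establishes --- and then apply pigeonhole to the three values $\alpha_{H_1},\alpha_{H_2},\alpha_{H_3}$. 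Your route dispenses with the normalization step (and with the tacit check that simultaneous column negation preserves weak unbiasedness), and it invokes the weak-unbiasedness hypothesis only for the one pair it contradicts, so it actually shows that among any three Hadamard matrices of order $4n$ some pair is modularly homogeneous and hence cannot be weakly unbiased. The paper's route is shorter on the page and parallels its proof of Theorem \ref{theorem:numberMUCHtwo}, but leaves the congruence $HJ+JK^t\equiv 0\pmod 4$ implicit; your argument makes that step explicit. Both proofs terminate at the same obstruction, Lemma \ref{lemma:no2mod4}.
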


\begin{proof}
Suppose on the contrary that there are more than two mutually weakly
unbiased Hadamard matrices of order $4n$. By negating the
appropriate columns of all matrices, we may assume that one of the
matrices has one normalized row. Select two other matrices, say
$H$, $K$. Then $HJ+JK^t\equiv 0 \pmod{4}$ and $HK^t\equiv 2J
\pmod{4}$, contradicting Lemma \ref{lemma:no2mod4}.
\end{proof}

We are now ready for the main result of this section and our
reason for studying unbiased complex Hadamard matrices. We need to
introduce a notation first. For the integers $a,b$ let
$G(a,b)=\{a\pm ib,-a\pm ib,ia\pm b,-ia\pm b\}$.

\begin{theorem}\label{theorem:existenceWUHM}
Let $H$, $K$ be a pair of unbiased complex Hadamard matrices of order
$2n$, $n$ odd, for which the entries of $HK^*$ are all in $G(a,b)$,
where $2n=a^2+b^2$, $a,b$ odd integers. Then there is a pair of
weakly unbiased Hadamard matrices of order $4n$.
\end{theorem}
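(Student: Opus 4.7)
The plan is to construct from $H$ and $K$ two real $\pm 1$ matrices $\tilde H, \tilde K$ of order $4n$ by splitting each complex matrix into its real and imaginary parts and combining them in a $2\times 2$ block form, and then to read off the weak unbiasedness conditions from a direct block computation of $\tilde H \tilde K^t$ using the hypothesis on $HK^*$.

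Concretely, I would write $H = A_1 + iB_1$ and $K = A_2 + iB_2$, where each of $A_j, B_j$ has entries in $\{-1,0,1\}$ and, at each position, exactly one of $A_j, B_j$ is nonzero (since the entries of $H,K$ lie in $\{\pm 1,\pm i\}$). Set $C_j = A_j + B_j$ and $D_j = A_j - B_j$; because $A_j$ and $B_j$ have disjoint supports with $\pm 1$ entries, both $C_j$ and $D_j$ are $\pm 1$ matrices of order $2n$. Define
$$\tilde H = \begin{pmatrix} C_1 & D_1 \\ D_1 & -C_1 \end{pmatrix}, \qquad \tilde K = \begin{pmatrix} C_2 & D_2 \\ D_2 & -C_2 \end{pmatrix}.$$
To verify $\tilde H$ is a real Hadamard matrix of order $4n$, I would extract from $HH^* = 2nI$ the two real identities $A_1 A_1^t + B_1 B_1^t = 2nI$ and $A_1 B_1^t = B_1 A_1^t$; expanding $CC^t, DD^t$ and $CD^t - DC^t$ then gives $CC^t + DD^t = 4nI$ and $CD^t = DC^t$, so that $\tilde H \tilde H^t = 4nI$, and similarly for $\tilde K$.

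For the cross product, a routine block expansion yields
$$\tilde H \tilde K^t = \begin{pmatrix} 2\,\mathrm{Re}(HK^*) & 2\,\mathrm{Im}(HK^*) \\ -2\,\mathrm{Im}(HK^*) & 2\,\mathrm{Re}(HK^*) \end{pmatrix},$$
the key identities being $C_1 C_2^t + D_1 D_2^t = 2(A_1 A_2^t + B_1 B_2^t)$ and $C_1 D_2^t - D_1 C_2^t = 2(B_1 A_2^t - A_1 B_2^t)$. By hypothesis, every entry of $HK^*$ lies in $G(a,b)$, so its real and imaginary parts each lie in $\{\pm a, \pm b\}$. Hence every entry of $\tilde H \tilde K^t$ lies in $\{\pm 2a, \pm 2b\}$: this gives at most two distinct absolute values, and since $a$ and $b$ are odd, every entry is $\equiv 2 \pmod 4$. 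Therefore $\tilde H \tilde K^t \equiv 2J \pmod 4$, and $\tilde H, \tilde K$ form a pair of weakly unbiased Hadamard matrices of order $4n$.

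No step looks seriously obstructive; the argument is essentially an organized block computation. The main design choice is the block construction itself: the doubling $C = A+B$, $D = A-B$ simultaneously converts the complex orthogonality $HH^* = 2nI$ into real orthogonality, and decouples the real and imaginary parts of $HK^*$ into the two independent blocks of $\tilde H \tilde K^t$. It is this clean separation that makes the hypothesis $HK^* \in G(a,b)$, with $a,b$ odd, translate directly into the two defining conditions of being weakly unbiased.
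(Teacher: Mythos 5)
Your proof is correct and takes essentially the same route as the paper's: the paper defines $H'=\bigl(\begin{smallmatrix}1&1\\1&-1\end{smallmatrix}\bigr)\otimes A+\bigl(\begin{smallmatrix}-1&1\\1&1\end{smallmatrix}\bigr)\otimes B$, which is exactly your block matrix built from $A\pm B$ up to a harmless rearrangement of blocks, and likewise computes $H'K'^{t}$ in block form as $2E$ and $\pm 2F$ where $HK^{*}=E+iF$, concluding weak unbiasedness from $E,F$ being $(\pm a,\pm b)$-matrices with $a,b$ odd. The only difference is that you spell out the verification that the doubled matrices are Hadamard, which the paper leaves as a routine calculation.
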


\begin{proof}
Let $H=A+iB$, $K=C+iD$, where $A,B$ and $C,D$ are
$(0,\pm 1)$-matrices of order $2n$
such that $A\pm B$ and $C\pm D$ are $\pm 1$-matrices.
Consider the matrices
$$H'= \left(\begin{array}{cc}
1 & 1\\
1 & -
\end{array}\right)\otimes A + \left(\begin{array}{cc}
- & 1\\
1 & 1
\end{array}\right)\otimes B$$
and $$K'=\left(\begin{array}{cc}
1 & 1\\
1 & -
\end{array}\right)\otimes C + \left(\begin{array}{cc}
-& 1\\
1 & 1
\end{array}\right)\otimes D.$$
It is only a routine calculation to see that $H',K'$ are Hadamard
matrices of order $4n$. Let $HK^*=E+iF$, where $E,F$ are $(\pm
a,\pm b)$-matrices of order $2n$. We have
$$H'K'^t=\left(\begin{array}{cc}
2(AC^t+BD^t) & -2(BC^t-AD^t)\\
2(BC^t-AD^t) & 2(AC^t+BD^t)
\end{array}\right)=\left(\begin{array}{cc}
2E & -2F\\
2F & 2E
\end{array}\right).$$
Using the fact that the entries of $HK^*$ are in $G(a,b)$
and noting that
$E,F$ are $(\pm a,\pm b)$-matrices, where $|a|$ ,$|b|$ are odd integers, it
follows that $H',K'$ are weakly unbiased.
\end{proof}

\begin{remark}
The spread of $a$'s and $b$'s in $H'K'^t$ is uniform; there are as
many $a$'s in $H'K'^t$ as $b$'s. We think the assumption that
all the entries of $HK^*$ belong to $G(a,b)$ is not necessary,
but we cannot prove it.\end{remark}

\begin{theorem}\label{main-2}
Let $H$, $K$ be a pair of unbiased complex Hadamard matrices of order
$2n$, where $n=a^2$, $a$ odd (and so $2n=a^2+a^2$)
for which the entries of $HK^*$ are in $G(a,a)$.
Then $H',K'$ constructed above form
a pair of unbiased Hadamard matrices of order $4n=4a^2$.
\end{theorem}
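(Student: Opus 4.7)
The plan is to treat this theorem as a direct specialization of Theorem \ref{theorem:existenceWUHM}: the same Kronecker-sum construction produces $H'$ and $K'$, and the only new content is to check that, under the sharper hypothesis $a=b$, the block matrix $H'K'^t$ has \emph{constant} entry modulus equal to $\sqrt{4n}$, which upgrades ``weakly unbiased'' to ``unbiased''.

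First I would invoke Theorem \ref{theorem:existenceWUHM} essentially verbatim. Writing $H=A+iB$ and $K=C+iD$ with $A,B,C,D$ real $(0,\pm 1)$-matrices of the prescribed type, the matrices $H'$ and $K'$ defined there are Hadamard matrices of order $4n$ and satisfy
\[
H'K'^t \;=\; \begin{pmatrix} 2E & -2F \\ 2F & 2E \end{pmatrix},
\qquad HK^* \;=\; E+iF,
\]
where $E$ and $F$ are matrices whose entries lie in $\{\pm a,\pm b\}$.

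Second I would specialize the entry description to $b=a$. Every element of $G(a,a)=\{\pm a\pm ia,\ \pm ia\pm a\}$ has both real and imaginary parts in $\{+a,-a\}$, so $E$ and $F$ are in fact $(\pm a)$-matrices. Hence every entry of $H'K'^t$ has absolute value $2a$. Since $H'$ and $K'$ have order $4n=4a^{2}$, we have $\sqrt{4n}=2a$, so the common modulus $2a$ is exactly $\sqrt{4n}$; this is precisely the definition of an unbiased pair of Hadamard matrices.

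I do not anticipate any genuine obstacle: the construction of $H'$, $K'$ and the block computation of $H'K'^t$ were already carried out in the proof of Theorem \ref{theorem:existenceWUHM}, so there is nothing new to verify there. The entire substance of the present theorem is the observation that the two possible entry moduli $|a|$ and $|b|$ of $E$ and $F$ collapse to a single value as soon as $a=b$, which is exactly what is needed to promote the weakly unbiased conclusion of Theorem \ref{theorem:existenceWUHM} to full unbiasedness.
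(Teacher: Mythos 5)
Your proposal is correct and follows essentially the same route as the paper: the published proof consists of the single observation that, in this case, the matrices $E$ and $F$ from the proof of Theorem \ref{theorem:existenceWUHM} are both $(\pm a)$-matrices, so every entry of $H'K'^t$ has modulus $2a=\sqrt{4n}$. Your write-up merely spells out this specialization in more detail; there is no substantive difference.
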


\begin{proof}
Note that in this case the matrices $E$ and $F$
in the proof of Theorem \ref{theorem:existenceWUHM}  are
both $\pm a$-matrices.
\end{proof}

\begin{corollary}\label{36}
There is a pair of unbiased Hadamard matrices of order $36$.
\end{corollary}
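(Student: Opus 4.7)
The plan is to apply Theorem \ref{main-2} directly to the pair $(H,K)$ of unbiased complex Hadamard matrices of order $18$ exhibited in Table \ref{matC18}. Since $36 = 4n$ with $n = 9 = 3^2$ and $a = 3$ odd, so that $2n = 18 = 3^2 + 3^2$, the only hypothesis of Theorem \ref{main-2} that requires checking is that every entry of $HK^*$ lies in $G(3,3) = \{\pm 3 \pm 3i\}$.

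First I would observe that for matrices of order $18$ with entries in $\{\pm 1, \pm i\}$, this hypothesis is actually automatic. Indeed, any entry $L_{ij}$ of $HK^*$ is a Gaussian integer $x + iy$, and unbiasedness forces $|L_{ij}|^2 = 2n = 18$. The only representation of $18$ as a sum of two non-negative integer squares is $18 = 9 + 9$, so $\{|x|,|y|\} = \{3,3\}$ and hence $L_{ij} \in G(3,3)$. Thus as soon as we know $(H,K)$ in Table \ref{matC18} is a pair of unbiased complex Hadamard matrices of order $18$, the $G(a,a)$ hypothesis holds for free.

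Next I would carry out the (routine but essential) verification that the matrices in Table \ref{matC18} genuinely form a pair of unbiased complex Hadamard matrices: that each is a complex Hadamard matrix (pairwise orthogonal rows under the Hermitian inner product) and that every entry of $HK^*$ has modulus $\sqrt{18}$. This is a finite check, and is the content of the computer search the authors refer to earlier in the section. Writing $H = A + iB$ and $K = C + iD$ with $A, B, C, D$ being $(0,\pm 1)$-matrices of order $18$, Theorem \ref{main-2} then produces the desired pair of unbiased real Hadamard matrices of order $36$ explicitly via the lifting
\[
H' = \begin{pmatrix} 1 & 1 \\ 1 & - \end{pmatrix} \otimes A + \begin{pmatrix} - & 1 \\ 1 & 1 \end{pmatrix} \otimes B,
\]
and the analogous formula for $K'$.

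The main obstacle is not in the argument but in producing the ingredient: the pair $(H,K)$ of Table \ref{matC18} itself, whose existence required the computer search described in Section~2. Once that pair is in hand, the corollary is an immediate specialisation of Theorem \ref{main-2} at $a = 3$, together with the elementary remark that $18 = 9 + 9$ is the unique sum-of-two-squares representation and hence that $G(3,3)$ contains every entry of $HK^*$ automatically.
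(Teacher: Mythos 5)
Your proof is correct and follows the paper's own argument exactly: apply Theorem \ref{main-2} to the order-$18$ pair of Table \ref{matC18}, noting that the $G(3,3)$ hypothesis is automatic because $18=9+9$ is the unique representation of $18$ as a sum of two squares. Your additional remarks on verifying the table data and writing out the lifting are just expansions of the same route, not a different one.
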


\begin{proof}
We apply Theorem \ref{main-2} to the pair of
unbiased complex Hadamard matrices of order 18 of Table \ref{matC18}.
The resulting pair of matrices is given in Tables \ref{mat36h} and
\ref{mat36k}.
The fact that all entries of $HK^*$
are in $G(a,a)$ is automatic in this case,
as 18 is sum of two squares in only one way.
\end{proof}

\begin{table}[htbp!]
\centering
\caption{A pair of unbiased Hadamard matrices of order 36: first matrix}
\label{mat36h}
$$
H=\left(
\begin{array}{c}
\Zp\Zp\Zp\Zp\Zp\Zp\Zp\Zp\Zp\Zp\Zp\Zp\Zp\Zp\Zp\Zp\Zp\Zp\Zp\Zp\Zp\Zp\Zp\Zp\Zp\Zp\Zp\Zp\Zp\Zp\Zp\Zp\Zp\Zp\Zp\Zp\\
\Zp\Zm\Zp\Zm\Zp\Zm\Zp\Zm\Zp\Zm\Zp\Zm\Zp\Zm\Zp\Zm\Zp\Zm\Zp\Zm\Zp\Zm\Zp\Zm\Zp\Zm\Zp\Zm\Zp\Zm\Zp\Zm\Zp\Zm\Zp\Zm\\
\Zp\Zp\Zm\Zm\Zm\Zp\Zm\Zp\Zp\Zm\Zm\Zp\Zp\Zm\Zp\Zm\Zp\Zm\Zm\Zp\Zm\Zp\Zp\Zm\Zp\Zm\Zp\Zm\Zm\Zp\Zp\Zm\Zm\Zp\Zm\Zp\\
\Zp\Zm\Zm\Zp\Zp\Zp\Zp\Zp\Zm\Zm\Zp\Zp\Zm\Zm\Zm\Zm\Zm\Zm\Zp\Zp\Zp\Zp\Zm\Zm\Zm\Zm\Zm\Zm\Zp\Zp\Zm\Zm\Zp\Zp\Zp\Zp\\
\Zp\Zp\Zm\Zp\Zm\Zm\Zm\Zp\Zm\Zp\Zp\Zm\Zm\Zp\Zp\Zm\Zp\Zm\Zp\Zm\Zm\Zp\Zm\Zp\Zp\Zm\Zp\Zm\Zp\Zm\Zm\Zp\Zp\Zm\Zm\Zp\\
\Zp\Zm\Zp\Zp\Zm\Zp\Zp\Zp\Zp\Zp\Zm\Zm\Zp\Zp\Zm\Zm\Zm\Zm\Zm\Zm\Zp\Zp\Zp\Zp\Zm\Zm\Zm\Zm\Zm\Zm\Zp\Zp\Zm\Zm\Zp\Zp\\
\Zp\Zp\Zm\Zp\Zm\Zp\Zm\Zm\Zm\Zp\Zm\Zp\Zp\Zm\Zm\Zp\Zp\Zm\Zp\Zm\Zp\Zm\Zm\Zp\Zm\Zp\Zp\Zm\Zp\Zm\Zp\Zm\Zm\Zp\Zp\Zm\\
\Zp\Zm\Zp\Zp\Zp\Zp\Zm\Zp\Zp\Zp\Zp\Zp\Zm\Zm\Zp\Zp\Zm\Zm\Zm\Zm\Zm\Zm\Zp\Zp\Zp\Zp\Zm\Zm\Zm\Zm\Zm\Zm\Zp\Zp\Zm\Zm\\
\Zp\Zp\Zp\Zm\Zm\Zp\Zm\Zp\Zm\Zm\Zm\Zp\Zm\Zp\Zp\Zm\Zm\Zp\Zp\Zm\Zp\Zm\Zp\Zm\Zm\Zp\Zm\Zp\Zp\Zm\Zp\Zm\Zp\Zm\Zm\Zp\\
\Zp\Zm\Zm\Zm\Zp\Zp\Zp\Zp\Zm\Zp\Zp\Zp\Zp\Zp\Zm\Zm\Zp\Zp\Zm\Zm\Zm\Zm\Zm\Zm\Zp\Zp\Zp\Zp\Zm\Zm\Zm\Zm\Zm\Zm\Zp\Zp\\
\Zp\Zp\Zm\Zp\Zp\Zm\Zm\Zp\Zm\Zp\Zm\Zm\Zm\Zp\Zm\Zp\Zp\Zm\Zm\Zp\Zp\Zm\Zp\Zm\Zp\Zm\Zm\Zp\Zm\Zp\Zp\Zm\Zp\Zm\Zp\Zm\\
\Zp\Zm\Zp\Zp\Zm\Zm\Zp\Zp\Zp\Zp\Zm\Zp\Zp\Zp\Zp\Zp\Zm\Zm\Zp\Zp\Zm\Zm\Zm\Zm\Zm\Zm\Zp\Zp\Zp\Zp\Zm\Zm\Zm\Zm\Zm\Zm\\
\Zp\Zp\Zp\Zm\Zm\Zp\Zp\Zm\Zm\Zp\Zm\Zp\Zm\Zm\Zm\Zp\Zm\Zp\Zp\Zm\Zm\Zp\Zp\Zm\Zp\Zm\Zp\Zm\Zm\Zp\Zm\Zp\Zp\Zm\Zp\Zm\\
\Zp\Zm\Zm\Zm\Zp\Zp\Zm\Zm\Zp\Zp\Zp\Zp\Zm\Zp\Zp\Zp\Zp\Zp\Zm\Zm\Zp\Zp\Zm\Zm\Zm\Zm\Zm\Zm\Zp\Zp\Zp\Zp\Zm\Zm\Zm\Zm\\
\Zp\Zp\Zp\Zm\Zp\Zm\Zm\Zp\Zp\Zm\Zm\Zp\Zm\Zp\Zm\Zm\Zm\Zp\Zm\Zp\Zp\Zm\Zm\Zp\Zp\Zm\Zp\Zm\Zp\Zm\Zm\Zp\Zm\Zp\Zp\Zm\\
\Zp\Zm\Zm\Zm\Zm\Zm\Zp\Zp\Zm\Zm\Zp\Zp\Zp\Zp\Zm\Zp\Zp\Zp\Zp\Zp\Zm\Zm\Zp\Zp\Zm\Zm\Zm\Zm\Zm\Zm\Zp\Zp\Zp\Zp\Zm\Zm\\
\Zp\Zp\Zp\Zm\Zp\Zm\Zp\Zm\Zm\Zp\Zp\Zm\Zm\Zp\Zm\Zp\Zm\Zm\Zm\Zp\Zm\Zp\Zp\Zm\Zm\Zp\Zp\Zm\Zp\Zm\Zp\Zm\Zm\Zp\Zm\Zp\\
\Zp\Zm\Zm\Zm\Zm\Zm\Zm\Zm\Zp\Zp\Zm\Zm\Zp\Zp\Zp\Zp\Zm\Zp\Zp\Zp\Zp\Zp\Zm\Zm\Zp\Zp\Zm\Zm\Zm\Zm\Zm\Zm\Zp\Zp\Zp\Zp\\
\Zp\Zp\Zm\Zp\Zp\Zm\Zp\Zm\Zp\Zm\Zm\Zp\Zp\Zm\Zm\Zp\Zm\Zp\Zm\Zm\Zm\Zp\Zm\Zp\Zp\Zm\Zm\Zp\Zp\Zm\Zp\Zm\Zp\Zm\Zm\Zp\\
\Zp\Zm\Zp\Zp\Zm\Zm\Zm\Zm\Zm\Zm\Zp\Zp\Zm\Zm\Zp\Zp\Zp\Zp\Zm\Zp\Zp\Zp\Zp\Zp\Zm\Zm\Zp\Zp\Zm\Zm\Zm\Zm\Zm\Zm\Zp\Zp\\
\Zp\Zp\Zm\Zp\Zm\Zp\Zp\Zm\Zp\Zm\Zp\Zm\Zm\Zp\Zp\Zm\Zm\Zp\Zm\Zp\Zm\Zm\Zm\Zp\Zm\Zp\Zp\Zm\Zm\Zp\Zp\Zm\Zp\Zm\Zp\Zm\\
\Zp\Zm\Zp\Zp\Zp\Zp\Zm\Zm\Zm\Zm\Zm\Zm\Zp\Zp\Zm\Zm\Zp\Zp\Zp\Zp\Zm\Zp\Zp\Zp\Zp\Zp\Zm\Zm\Zp\Zp\Zm\Zm\Zm\Zm\Zm\Zm\\
\Zp\Zp\Zp\Zm\Zm\Zp\Zm\Zp\Zp\Zm\Zp\Zm\Zp\Zm\Zm\Zp\Zp\Zm\Zm\Zp\Zm\Zp\Zm\Zm\Zm\Zp\Zm\Zp\Zp\Zm\Zm\Zp\Zp\Zm\Zp\Zm\\
\Zp\Zm\Zm\Zm\Zp\Zp\Zp\Zp\Zm\Zm\Zm\Zm\Zm\Zm\Zp\Zp\Zm\Zm\Zp\Zp\Zp\Zp\Zm\Zp\Zp\Zp\Zp\Zp\Zm\Zm\Zp\Zp\Zm\Zm\Zm\Zm\\
\Zp\Zp\Zp\Zm\Zp\Zm\Zm\Zp\Zm\Zp\Zp\Zm\Zp\Zm\Zp\Zm\Zm\Zp\Zp\Zm\Zm\Zp\Zm\Zp\Zm\Zm\Zm\Zp\Zm\Zp\Zp\Zm\Zm\Zp\Zp\Zm\\
\Zp\Zm\Zm\Zm\Zm\Zm\Zp\Zp\Zp\Zp\Zm\Zm\Zm\Zm\Zm\Zm\Zp\Zp\Zm\Zm\Zp\Zp\Zp\Zp\Zm\Zp\Zp\Zp\Zp\Zp\Zm\Zm\Zp\Zp\Zm\Zm\\
\Zp\Zp\Zp\Zm\Zp\Zm\Zp\Zm\Zm\Zp\Zm\Zp\Zp\Zm\Zp\Zm\Zp\Zm\Zm\Zp\Zp\Zm\Zm\Zp\Zm\Zp\Zm\Zm\Zm\Zp\Zm\Zp\Zp\Zm\Zm\Zp\\
\Zp\Zm\Zm\Zm\Zm\Zm\Zm\Zm\Zp\Zp\Zp\Zp\Zm\Zm\Zm\Zm\Zm\Zm\Zp\Zp\Zm\Zm\Zp\Zp\Zp\Zp\Zm\Zp\Zp\Zp\Zp\Zp\Zm\Zm\Zp\Zp\\
\Zp\Zp\Zm\Zp\Zp\Zm\Zp\Zm\Zp\Zm\Zm\Zp\Zm\Zp\Zp\Zm\Zp\Zm\Zp\Zm\Zm\Zp\Zp\Zm\Zm\Zp\Zm\Zp\Zm\Zm\Zm\Zp\Zm\Zp\Zp\Zm\\
\Zp\Zm\Zp\Zp\Zm\Zm\Zm\Zm\Zm\Zm\Zp\Zp\Zp\Zp\Zm\Zm\Zm\Zm\Zm\Zm\Zp\Zp\Zm\Zm\Zp\Zp\Zp\Zp\Zm\Zp\Zp\Zp\Zp\Zp\Zm\Zm\\
\Zp\Zp\Zp\Zm\Zm\Zp\Zp\Zm\Zp\Zm\Zp\Zm\Zm\Zp\Zm\Zp\Zp\Zm\Zp\Zm\Zp\Zm\Zm\Zp\Zp\Zm\Zm\Zp\Zm\Zp\Zm\Zm\Zm\Zp\Zm\Zp\\
\Zp\Zm\Zm\Zm\Zp\Zp\Zm\Zm\Zm\Zm\Zm\Zm\Zp\Zp\Zp\Zp\Zm\Zm\Zm\Zm\Zm\Zm\Zp\Zp\Zm\Zm\Zp\Zp\Zp\Zp\Zm\Zp\Zp\Zp\Zp\Zp\\
\Zp\Zp\Zm\Zp\Zp\Zm\Zm\Zp\Zp\Zm\Zp\Zm\Zp\Zm\Zm\Zp\Zm\Zp\Zp\Zm\Zp\Zm\Zp\Zm\Zm\Zp\Zp\Zm\Zm\Zp\Zm\Zp\Zm\Zm\Zm\Zp\\
\Zp\Zm\Zp\Zp\Zm\Zm\Zp\Zp\Zm\Zm\Zm\Zm\Zm\Zm\Zp\Zp\Zp\Zp\Zm\Zm\Zm\Zm\Zm\Zm\Zp\Zp\Zm\Zm\Zp\Zp\Zp\Zp\Zm\Zp\Zp\Zp\\
\Zp\Zp\Zm\Zp\Zm\Zp\Zp\Zm\Zm\Zp\Zp\Zm\Zp\Zm\Zp\Zm\Zm\Zp\Zm\Zp\Zp\Zm\Zp\Zm\Zp\Zm\Zm\Zp\Zp\Zm\Zm\Zp\Zm\Zp\Zm\Zm\\
\Zp\Zm\Zp\Zp\Zp\Zp\Zm\Zm\Zp\Zp\Zm\Zm\Zm\Zm\Zm\Zm\Zp\Zp\Zp\Zp\Zm\Zm\Zm\Zm\Zm\Zm\Zp\Zp\Zm\Zm\Zp\Zp\Zp\Zp\Zm\Zp
\end{array}
\right)
$$
\end{table}

\begin{table}[htbp!]
\centering
\caption{A pair of unbiased Hadamard matrices of order 36: second matrix}
\label{mat36k}
$$
K=\left(
\begin{array}{c}
\Zp\Zp\Zp\Zm\Zp\Zm\Zm\Zp\Zp\Zm\Zm\Zp\Zp\Zm\Zp\Zm\Zm\Zm\Zp\Zm\Zm\Zp\Zp\Zp\Zp\Zp\Zp\Zp\Zp\Zp\Zm\Zp\Zp\Zm\Zm\Zm\\
\Zp\Zm\Zm\Zm\Zm\Zm\Zp\Zp\Zm\Zm\Zp\Zp\Zm\Zm\Zm\Zm\Zm\Zp\Zm\Zm\Zp\Zp\Zp\Zm\Zp\Zm\Zp\Zm\Zp\Zm\Zp\Zp\Zm\Zm\Zm\Zp\\
\Zp\Zp\Zp\Zm\Zm\Zp\Zp\Zm\Zm\Zp\Zp\Zm\Zm\Zp\Zp\Zm\Zp\Zp\Zm\Zm\Zm\Zp\Zp\Zp\Zm\Zp\Zm\Zp\Zp\Zp\Zm\Zp\Zm\Zm\Zp\Zp\\
\Zp\Zm\Zm\Zm\Zp\Zp\Zm\Zm\Zp\Zp\Zm\Zm\Zp\Zp\Zm\Zm\Zp\Zm\Zm\Zp\Zp\Zp\Zp\Zm\Zp\Zp\Zp\Zp\Zp\Zm\Zp\Zp\Zm\Zp\Zp\Zm\\
\Zp\Zp\Zp\Zm\Zm\Zp\Zm\Zp\Zp\Zm\Zp\Zm\Zp\Zp\Zp\Zp\Zm\Zp\Zm\Zm\Zp\Zm\Zp\Zm\Zp\Zm\Zm\Zm\Zm\Zp\Zp\Zp\Zp\Zp\Zp\Zm\\
\Zp\Zm\Zm\Zm\Zp\Zp\Zp\Zp\Zm\Zm\Zm\Zm\Zp\Zm\Zp\Zm\Zp\Zp\Zm\Zp\Zm\Zm\Zm\Zm\Zm\Zm\Zm\Zp\Zp\Zp\Zp\Zm\Zp\Zm\Zm\Zm\\
\Zp\Zp\Zp\Zm\Zm\Zp\Zp\Zm\Zp\Zm\Zm\Zm\Zp\Zm\Zm\Zp\Zp\Zp\Zp\Zp\Zp\Zp\Zp\Zp\Zm\Zp\Zp\Zm\Zm\Zm\Zp\Zm\Zp\Zm\Zm\Zp\\
\Zp\Zm\Zm\Zm\Zp\Zp\Zm\Zm\Zm\Zm\Zm\Zp\Zm\Zm\Zp\Zp\Zp\Zm\Zp\Zm\Zp\Zm\Zp\Zm\Zp\Zp\Zm\Zm\Zm\Zp\Zm\Zm\Zm\Zm\Zp\Zp\\
\Zp\Zp\Zp\Zm\Zp\Zm\Zp\Zp\Zm\Zp\Zm\Zp\Zm\Zp\Zm\Zp\Zp\Zp\Zp\Zm\Zp\Zm\Zm\Zm\Zm\Zp\Zp\Zp\Zm\Zp\Zp\Zp\Zm\Zp\Zm\Zm\\
\Zp\Zm\Zm\Zm\Zm\Zm\Zp\Zm\Zp\Zp\Zp\Zp\Zp\Zp\Zp\Zp\Zp\Zm\Zm\Zm\Zm\Zm\Zm\Zp\Zp\Zp\Zp\Zm\Zp\Zp\Zp\Zm\Zp\Zp\Zm\Zp\\
\Zp\Zp\Zm\Zp\Zm\Zp\Zm\Zm\Zm\Zm\Zm\Zp\Zm\Zp\Zm\Zm\Zp\Zp\Zm\Zm\Zm\Zm\Zp\Zp\Zp\Zm\Zp\Zp\Zm\Zm\Zm\Zm\Zp\Zp\Zm\Zm\\
\Zp\Zm\Zp\Zp\Zp\Zp\Zm\Zp\Zm\Zp\Zp\Zp\Zp\Zp\Zm\Zp\Zp\Zm\Zm\Zp\Zm\Zp\Zp\Zm\Zm\Zm\Zp\Zm\Zm\Zp\Zm\Zp\Zp\Zm\Zm\Zp\\
\Zp\Zp\Zm\Zp\Zp\Zm\Zp\Zp\Zm\Zm\Zp\Zm\Zm\Zm\Zp\Zp\Zp\Zm\Zm\Zp\Zp\Zm\Zp\Zp\Zm\Zp\Zp\Zm\Zp\Zm\Zm\Zp\Zp\Zp\Zp\Zm\\
\Zp\Zm\Zp\Zp\Zm\Zm\Zp\Zm\Zm\Zp\Zm\Zm\Zm\Zp\Zp\Zm\Zm\Zm\Zp\Zp\Zm\Zm\Zp\Zm\Zp\Zp\Zm\Zm\Zm\Zm\Zp\Zp\Zp\Zm\Zm\Zm\\
\Zp\Zp\Zm\Zp\Zp\Zm\Zm\Zm\Zp\Zp\Zp\Zp\Zm\Zm\Zp\Zm\Zm\Zp\Zm\Zp\Zp\Zp\Zp\Zm\Zm\Zp\Zm\Zp\Zm\Zp\Zp\Zm\Zp\Zp\Zm\Zp\\
\Zp\Zm\Zp\Zp\Zm\Zm\Zm\Zp\Zp\Zm\Zp\Zm\Zm\Zp\Zm\Zm\Zp\Zp\Zp\Zp\Zp\Zm\Zm\Zm\Zp\Zp\Zp\Zp\Zp\Zp\Zm\Zm\Zp\Zm\Zp\Zp\\
\Zp\Zp\Zp\Zm\Zp\Zm\Zp\Zp\Zp\Zp\Zm\Zm\Zm\Zm\Zm\Zm\Zp\Zm\Zm\Zm\Zm\Zp\Zm\Zm\Zp\Zm\Zm\Zm\Zm\Zm\Zm\Zm\Zp\Zp\Zp\Zp\\
\Zp\Zm\Zm\Zm\Zm\Zm\Zp\Zm\Zp\Zm\Zm\Zp\Zm\Zp\Zm\Zp\Zm\Zm\Zm\Zp\Zp\Zp\Zm\Zp\Zm\Zm\Zm\Zp\Zm\Zp\Zm\Zp\Zp\Zm\Zp\Zm\\
\Zp\Zp\Zm\Zp\Zp\Zp\Zp\Zm\Zm\Zp\Zp\Zm\Zp\Zm\Zm\Zm\Zm\Zm\Zp\Zm\Zp\Zm\Zm\Zp\Zp\Zm\Zp\Zp\Zm\Zp\Zp\Zp\Zp\Zm\Zp\Zp\\
\Zp\Zm\Zp\Zp\Zp\Zm\Zm\Zm\Zp\Zp\Zm\Zm\Zm\Zm\Zm\Zp\Zm\Zp\Zm\Zm\Zm\Zm\Zp\Zp\Zm\Zm\Zp\Zm\Zp\Zp\Zp\Zm\Zm\Zm\Zp\Zm\\
\Zp\Zp\Zm\Zp\Zp\Zp\Zp\Zm\Zp\Zm\Zm\Zm\Zm\Zp\Zp\Zp\Zm\Zp\Zp\Zp\Zm\Zp\Zm\Zm\Zp\Zm\Zp\Zm\Zp\Zp\Zm\Zp\Zm\Zp\Zm\Zp\\
\Zp\Zm\Zp\Zp\Zp\Zm\Zm\Zm\Zm\Zm\Zm\Zp\Zp\Zp\Zp\Zm\Zp\Zp\Zp\Zm\Zp\Zp\Zm\Zp\Zm\Zm\Zm\Zm\Zp\Zm\Zp\Zp\Zp\Zp\Zp\Zp\\
\Zp\Zp\Zm\Zp\Zm\Zm\Zm\Zp\Zm\Zm\Zm\Zm\Zp\Zp\Zp\Zp\Zm\Zm\Zm\Zm\Zm\Zp\Zm\Zm\Zm\Zp\Zp\Zp\Zm\Zm\Zp\Zm\Zm\Zm\Zp\Zp\\
\Zp\Zm\Zp\Zp\Zm\Zp\Zp\Zp\Zm\Zp\Zm\Zp\Zp\Zm\Zp\Zm\Zm\Zp\Zm\Zp\Zp\Zp\Zm\Zp\Zp\Zp\Zp\Zm\Zm\Zp\Zm\Zm\Zm\Zp\Zp\Zm\\
\Zp\Zp\Zp\Zp\Zm\Zp\Zm\Zp\Zp\Zp\Zm\Zp\Zm\Zm\Zp\Zp\Zp\Zm\Zm\Zp\Zp\Zm\Zm\Zp\Zp\Zm\Zm\Zp\Zp\Zm\Zp\Zp\Zm\Zm\Zm\Zp\\
\Zp\Zm\Zp\Zm\Zp\Zp\Zp\Zp\Zp\Zm\Zp\Zp\Zm\Zp\Zp\Zm\Zm\Zm\Zp\Zp\Zm\Zm\Zp\Zp\Zm\Zm\Zp\Zp\Zm\Zm\Zp\Zm\Zm\Zp\Zp\Zp\\
\Zp\Zp\Zm\Zm\Zp\Zp\Zm\Zp\Zm\Zp\Zp\Zp\Zm\Zp\Zm\Zp\Zm\Zp\Zp\Zp\Zm\Zp\Zm\Zp\Zp\Zp\Zm\Zm\Zp\Zm\Zp\Zm\Zp\Zm\Zp\Zm\\
\Zp\Zm\Zm\Zp\Zp\Zm\Zp\Zp\Zp\Zp\Zp\Zm\Zp\Zp\Zp\Zp\Zp\Zp\Zp\Zm\Zp\Zp\Zp\Zp\Zp\Zm\Zm\Zp\Zm\Zm\Zm\Zm\Zm\Zm\Zm\Zm\\
\Zp\Zp\Zp\Zp\Zp\Zp\Zp\Zm\Zp\Zm\Zp\Zp\Zp\Zp\Zm\Zm\Zm\Zm\Zm\Zm\Zp\Zm\Zm\Zm\Zm\Zp\Zm\Zm\Zp\Zm\Zm\Zm\Zm\Zm\Zm\Zm\\
\Zp\Zm\Zp\Zm\Zp\Zm\Zm\Zm\Zm\Zm\Zp\Zm\Zp\Zm\Zm\Zp\Zm\Zp\Zm\Zp\Zm\Zm\Zm\Zp\Zp\Zp\Zm\Zp\Zm\Zm\Zm\Zp\Zm\Zp\Zm\Zp\\
\Zp\Zp\Zm\Zm\Zm\Zm\Zm\Zp\Zm\Zp\Zm\Zm\Zp\Zp\Zm\Zm\Zm\Zm\Zp\Zp\Zp\Zm\Zp\Zp\Zm\Zm\Zm\Zm\Zp\Zp\Zm\Zm\Zm\Zp\Zm\Zp\\
\Zp\Zm\Zm\Zp\Zm\Zp\Zp\Zp\Zp\Zp\Zm\Zp\Zp\Zm\Zm\Zp\Zm\Zp\Zp\Zm\Zm\Zm\Zp\Zm\Zm\Zp\Zm\Zp\Zp\Zm\Zm\Zp\Zp\Zp\Zp\Zp\\
\Zp\Zp\Zp\Zp\Zm\Zm\Zp\Zm\Zm\Zm\Zp\Zp\Zp\Zm\Zm\Zp\Zp\Zm\Zp\Zp\Zm\Zp\Zp\Zm\Zp\Zm\Zm\Zp\Zp\Zp\Zp\Zm\Zm\Zp\Zp\Zm\\
\Zp\Zm\Zp\Zm\Zm\Zp\Zm\Zm\Zm\Zp\Zp\Zm\Zm\Zm\Zp\Zp\Zm\Zm\Zp\Zm\Zp\Zp\Zm\Zm\Zm\Zm\Zp\Zp\Zp\Zm\Zm\Zm\Zp\Zp\Zm\Zm\\
\Zp\Zp\Zm\Zm\Zm\Zm\Zm\Zm\Zp\Zp\Zp\Zp\Zp\Zm\Zp\Zm\Zp\Zp\Zp\Zp\Zm\Zm\Zm\Zm\Zm\Zm\Zp\Zm\Zm\Zm\Zm\Zp\Zm\Zm\Zp\Zm\\
\Zp\Zm\Zm\Zp\Zm\Zp\Zm\Zp\Zp\Zm\Zp\Zm\Zm\Zm\Zm\Zm\Zp\Zm\Zp\Zm\Zm\Zp\Zm\Zp\Zm\Zp\Zm\Zm\Zm\Zp\Zp\Zp\Zm\Zp\Zm\Zm
\end{array}
\right)
$$
\end{table}

\begin{corollary}
There is a pair of weakly unbiased Hadamard matrices of order $20$.
\end{corollary}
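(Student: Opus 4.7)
The plan is to mimic the proof of Corollary \ref{36}, applying Theorem \ref{theorem:existenceWUHM} to the pair of {\bf MUCH} matrices of order $10$ listed in Table \ref{matC10}. Here $2n=10$ with $n=5$ odd, and $4n=20$, so the construction of Theorem \ref{theorem:existenceWUHM} will produce a pair of Hadamard matrices $H',K'$ of order $20$.

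The only hypothesis that requires checking is that the entries of $HK^*$ all lie in $G(a,b)$ for some odd $a,b$ with $2n=a^2+b^2$. For $2n=10$, the only way to write $10$ as a sum of two squares is $10 = 1^2 + 3^2$, so $\{|a|,|b|\}=\{1,3\}$, both odd. Since $H$ and $K$ are unbiased complex Hadamard matrices, every entry $x+iy$ of $HK^*$ satisfies $x^2+y^2=10$; because this representation is unique up to signs and order, one automatically has $\{|x|,|y|\}=\{1,3\}$, and therefore $HK^*$ has all entries in $G(1,3)$. Thus the hypothesis of Theorem \ref{theorem:existenceWUHM} is met for free, exactly as in the proof of Corollary \ref{36} for order $18$.

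Applying Theorem \ref{theorem:existenceWUHM} to the pair $H,K$ in Table \ref{matC10} then yields the desired weakly unbiased Hadamard matrices $H',K'$ of order $20$. There is no real obstacle here: the argument is a direct invocation of the theorem, and the only point worth emphasizing is that, just as for $n=9$, the uniqueness of the decomposition $2n=a^2+b^2$ in the case $n=5$ makes the $G(a,b)$-condition automatic.
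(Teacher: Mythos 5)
Your proposal is correct and follows the paper's own route: the paper likewise proves this corollary by applying Theorem \ref{theorem:existenceWUHM} to the order-$10$ pair of Table \ref{matC10}, noting that all entries of $HK^*$ lie in $G(a,b)$ with $\{a,b\}=\{1,3\}$. Your added observation that this $G(1,3)$-condition is automatic from the uniqueness of $10=1^2+3^2$ is sound and simply makes explicit the same reasoning the paper spells out in the order-$36$ case.
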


\begin{proof}
We apply Theorem \ref {theorem:existenceWUHM} to the pair
of unbiased complex Hadamard matrices of order 10
of Table \ref{matC10}.
The resulting pair of matrices is given in Table \ref{mat20}.
All entries of $HK^*$ are in
$G(a,b)$, where $\{a,b\}=\{1,3\}$.
\end{proof}

\begin{table}[t!]
\centering
\caption{A pair $H$, $K$ of weakly unbiased Hadamard matrices of order 20}
\label{mat20}
$$
\left(
\begin{array}{c}
\Zp\Zp\Zp\Zp\Zp\Zp\Zp\Zp\Zp\Zp\Zp\Zp\Zp\Zp\Zp\Zp\Zp\Zp\Zp\Zp\\
\Zp\Zm\Zp\Zm\Zp\Zm\Zp\Zm\Zp\Zm\Zp\Zm\Zp\Zm\Zp\Zm\Zp\Zm\Zp\Zm\\
\Zp\Zp\Zm\Zm\Zm\Zp\Zm\Zp\Zp\Zm\Zm\Zp\Zp\Zm\Zp\Zm\Zp\Zm\Zm\Zp\\
\Zp\Zm\Zm\Zp\Zp\Zp\Zp\Zp\Zm\Zm\Zp\Zp\Zm\Zm\Zm\Zm\Zm\Zm\Zp\Zp\\
\Zp\Zp\Zm\Zp\Zm\Zm\Zm\Zp\Zp\Zm\Zp\Zm\Zm\Zp\Zm\Zp\Zp\Zm\Zp\Zm\\
\Zp\Zm\Zp\Zp\Zm\Zp\Zp\Zp\Zm\Zm\Zm\Zm\Zp\Zp\Zp\Zp\Zm\Zm\Zm\Zm\\
\Zp\Zp\Zm\Zp\Zm\Zp\Zm\Zm\Zm\Zp\Zp\Zm\Zp\Zm\Zp\Zm\Zm\Zp\Zp\Zm\\
\Zp\Zm\Zp\Zp\Zp\Zp\Zm\Zp\Zp\Zp\Zm\Zm\Zm\Zm\Zm\Zm\Zp\Zp\Zm\Zm\\
\Zp\Zp\Zp\Zm\Zp\Zm\Zm\Zp\Zm\Zm\Zm\Zp\Zm\Zp\Zp\Zm\Zm\Zp\Zp\Zm\\
\Zp\Zm\Zm\Zm\Zm\Zm\Zp\Zp\Zm\Zp\Zp\Zp\Zp\Zp\Zm\Zm\Zp\Zp\Zm\Zm\\
\Zp\Zp\Zm\Zp\Zp\Zm\Zp\Zm\Zm\Zp\Zm\Zm\Zm\Zp\Zp\Zm\Zp\Zm\Zm\Zp\\
\Zp\Zm\Zp\Zp\Zm\Zm\Zm\Zm\Zp\Zp\Zm\Zp\Zp\Zp\Zm\Zm\Zm\Zm\Zp\Zp\\
\Zp\Zp\Zp\Zm\Zm\Zp\Zp\Zm\Zm\Zp\Zm\Zp\Zm\Zm\Zm\Zp\Zp\Zm\Zp\Zm\\
\Zp\Zm\Zm\Zm\Zp\Zp\Zm\Zm\Zp\Zp\Zp\Zp\Zm\Zp\Zp\Zp\Zm\Zm\Zm\Zm\\
\Zp\Zp\Zp\Zm\Zm\Zp\Zp\Zm\Zp\Zm\Zp\Zm\Zm\Zp\Zm\Zm\Zm\Zp\Zm\Zp\\
\Zp\Zm\Zm\Zm\Zp\Zp\Zm\Zm\Zm\Zm\Zm\Zm\Zp\Zp\Zm\Zp\Zp\Zp\Zp\Zp\\
\Zp\Zp\Zp\Zm\Zp\Zm\Zm\Zp\Zm\Zp\Zp\Zm\Zp\Zm\Zm\Zp\Zm\Zm\Zm\Zp\\
\Zp\Zm\Zm\Zm\Zm\Zm\Zp\Zp\Zp\Zp\Zm\Zm\Zm\Zm\Zp\Zp\Zm\Zp\Zp\Zp\\
\Zp\Zp\Zm\Zp\Zp\Zm\Zp\Zm\Zp\Zm\Zm\Zp\Zp\Zm\Zm\Zp\Zm\Zp\Zm\Zm\\
\Zp\Zm\Zp\Zp\Zm\Zm\Zm\Zm\Zm\Zm\Zp\Zp\Zm\Zm\Zp\Zp\Zp\Zp\Zm\Zp
\end{array}
\right)
, \quad
\left(
\begin{array}{c}
\Zp\Zp\Zp\Zm\Zm\Zp\Zm\Zp\Zm\Zp\Zp\Zp\Zm\Zm\Zm\Zm\Zm\Zp\Zp\Zp\\
\Zp\Zm\Zm\Zm\Zp\Zp\Zp\Zp\Zp\Zp\Zp\Zm\Zm\Zp\Zm\Zp\Zp\Zp\Zp\Zm\\
\Zp\Zp\Zp\Zm\Zp\Zp\Zp\Zm\Zp\Zm\Zm\Zp\Zm\Zm\Zp\Zm\Zp\Zp\Zm\Zm\\
\Zp\Zm\Zm\Zm\Zp\Zm\Zm\Zm\Zm\Zm\Zp\Zp\Zm\Zp\Zm\Zm\Zp\Zm\Zm\Zp\\
\Zp\Zp\Zm\Zp\Zm\Zm\Zp\Zm\Zm\Zm\Zm\Zm\Zm\Zp\Zm\Zm\Zm\Zp\Zp\Zm\\
\Zp\Zm\Zp\Zp\Zm\Zp\Zm\Zm\Zm\Zp\Zm\Zp\Zp\Zp\Zm\Zp\Zp\Zp\Zm\Zm\\
\Zp\Zp\Zm\Zp\Zp\Zp\Zp\Zp\Zm\Zp\Zp\Zp\Zp\Zp\Zp\Zm\Zm\Zm\Zm\Zm\\
\Zp\Zm\Zp\Zp\Zp\Zm\Zp\Zm\Zp\Zp\Zp\Zm\Zp\Zm\Zm\Zm\Zm\Zp\Zm\Zp\\
\Zp\Zp\Zm\Zp\Zm\Zm\Zp\Zp\Zp\Zp\Zm\Zp\Zm\Zm\Zm\Zp\Zp\Zm\Zm\Zp\\
\Zp\Zm\Zp\Zp\Zm\Zp\Zp\Zm\Zp\Zm\Zp\Zp\Zm\Zp\Zp\Zp\Zm\Zm\Zp\Zp\\
\Zp\Zp\Zp\Zm\Zm\Zm\Zp\Zp\Zm\Zm\Zp\Zm\Zp\Zp\Zp\Zp\Zp\Zp\Zm\Zp\\
\Zp\Zm\Zm\Zm\Zm\Zp\Zp\Zm\Zm\Zp\Zm\Zm\Zp\Zm\Zp\Zm\Zp\Zm\Zp\Zp\\
\Zp\Zp\Zm\Zm\Zm\Zp\Zm\Zm\Zp\Zm\Zp\Zm\Zp\Zm\Zm\Zp\Zm\Zm\Zm\Zm\\
\Zp\Zm\Zm\Zp\Zp\Zp\Zm\Zp\Zm\Zm\Zm\Zm\Zm\Zm\Zp\Zp\Zm\Zp\Zm\Zp\\
\Zp\Zp\Zp\Zp\Zp\Zm\Zm\Zm\Zm\Zp\Zp\Zm\Zm\Zm\Zp\Zp\Zp\Zm\Zp\Zm\\
\Zp\Zm\Zp\Zm\Zm\Zm\Zm\Zp\Zp\Zp\Zm\Zm\Zm\Zp\Zp\Zm\Zm\Zm\Zm\Zm\\
\Zp\Zp\Zm\Zm\Zp\Zm\Zm\Zm\Zp\Zp\Zm\Zp\Zp\Zp\Zp\Zp\Zm\Zp\Zp\Zp\\
\Zp\Zm\Zm\Zp\Zm\Zm\Zm\Zp\Zp\Zm\Zp\Zp\Zp\Zm\Zp\Zm\Zp\Zp\Zp\Zm\\
\Zp\Zp\Zp\Zp\Zp\Zp\Zm\Zp\Zp\Zm\Zm\Zm\Zp\Zp\Zm\Zm\Zp\Zm\Zp\Zp\\
\Zp\Zm\Zp\Zm\Zp\Zm\Zp\Zp\Zm\Zm\Zm\Zp\Zp\Zm\Zm\Zp\Zm\Zm\Zp\Zm
\end{array}
\right)
$$
\end{table}

Consider the even integer $2n$, $n=a^2$ for some odd integer $a$,
and assume that $2n=a^2+a^2$ is the only way that $2n$ can be
written as sum of two squares. Let $H,K$ be two unbiased complex
Hadamard matrices $H,K$ of order $2n$. It is easy to see that
$HK^*=(a+ia)L$, where $L$ is a complex Hadamard matrix of order
$2n$. A pair of unbiased complex Hadamard matrices $H,K$ of order
$2n$, $2n=a^2+b^2$, is called  {\it special} if $HK^*=(a+ib)L$ for
some complex Hadamard matrix $L$. The unbiased complex Hadamard
matrices of orders 2 and 18 above are special. We did an
exhaustive computer search and found none of order 10.

\section{Unbiased bases}

Let $H,K$ be a pair of special unbiased complex Hadamard
matrices of order $2n^2$ corresponding to the decomposition
$2n^2=n^2+n^2$. Then the normalized rows of $H$ and $K$, or
equivalently the rows of
 $\frac{1}{{\sqrt {2n^2}}}H$ and $\frac{1}{{\sqrt {2n^2}}}K$, form two orthonormal bases
for $\mathbb{C}^{2n^2}$ in such a way that for every pair of
vectors $u,v$ from different bases, $\langle u,v\rangle\in
\Cset=\{{\frac{1}{2n}}(1+i), -{\frac{1}{2n}}(1+i),
{\frac{1}{2n}}(1-i), -{\frac{1}{2n}}(1-i)\}$ (note that $\frac{n+in}{2n^2}={\frac{1}{2n}}(1+i)$). Here $\langle, \rangle$ denotes the standard
Hermitian inner product in $\mathbb{C}^{2n^2}$. Adding
$\{\frac{1+i}{\sqrt {2}}~{\bf b}: ~{\bf b}\in B_s\}$, where $B_s$
denotes the standard basis in $\mathbb{C}^{2n^2}$, to these  bases
we get 3 orthonormal bases for $\mathbb{C}^{2n^2}$ in such a way
that for every pair of vectors $u,v$ from different bases,
$\langle u,v\rangle\in \Cset$.
 Two
orthonormal bases $\B_1$ and $\B_2$ in $\mathbb{C}^{2n^2}$ are
called {\it  unbiased complex bases} if $\langle u,v\rangle \in
\Cset$ for all $u\in \B_1$ and $v\in \B_2$.

We will use $|{\bf MUCB}(n)|$ to denote the number of elements in
a set of mutually unbiased complex bases for $\mathbb{C}^n$.


\begin{lemma} $|{\bf MUCB}(2n^2)|\le 3$
for any odd integer $n$.
Equality is attained for $n=1,3$.
\end{lemma}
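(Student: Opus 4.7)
The plan is to reduce the upper bound to Theorem \ref{theorem:numberMUCHtwo} by converting $k$ mutually unbiased complex bases of $\mathbb{C}^{2n^2}$ into $k-1$ mutually unbiased complex Hadamard matrices of order $2n^2$.

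First I would fix one basis $\mathcal{B}_1$ and expand every vector of each remaining basis $\mathcal{B}_j$ in coordinates with respect to $\mathcal{B}_1$. Because $\mathcal{B}_1$ and $\mathcal{B}_j$ are unbiased, every such coordinate is of the form $\langle u,v\rangle$ with $u\in\mathcal{B}_j$, $v\in\mathcal{B}_1$, and therefore lies in $\Cset$. The key algebraic observation is
$$\Cset \;=\; \frac{1+i}{2n}\,\{\,1,-1,i,-i\,\},$$
so the coordinate matrix $U_j$ of $\mathcal{B}_j$ (with rows indexed by the vectors of $\mathcal{B}_j$) factors as $U_j=\tfrac{1+i}{2n}H_j$ where $H_j$ has entries in $\{\pm 1,\pm i\}$.

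Next I would verify that $H_2,\ldots,H_k$ are mutually unbiased complex Hadamard matrices of order $2n^2$. Orthonormality of $\mathcal{B}_j$ gives $U_jU_j^{*}=I$, so $H_jH_j^{*}=2n^2 I$, whence $H_j$ is a complex Hadamard matrix. For distinct $j,k'\in\{2,\ldots,k\}$, the entries of $U_jU_{k'}^{*}=\tfrac{1}{2n^2}H_jH_{k'}^{*}$ lie in $\Cset$, so the entries of $H_jH_{k'}^{*}$ all have modulus $2n^2\cdot \sqrt{2}/(2n)=\sqrt{2n^2}$, which is the defining condition for unbiased complex Hadamard matrices. Since $n$ is odd, so is $n^2$, and Theorem \ref{theorem:numberMUCHtwo} applied with odd parameter $n^2$ yields $k-1\le |\mathbf{MUCH}(2n^2)|\le 2$, hence $k\le 3$.

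For the equality claim I would use the construction already sketched in the paragraph preceding the lemma. Given a special pair $H,K$ of unbiased complex Hadamard matrices of order $2n^2$ with $HK^{*}=(n+in)L$, the sets $\mathcal{B}_1=\{H_{i,\cdot}/\sqrt{2n^2}\}$ and $\mathcal{B}_2=\{K_{i,\cdot}/\sqrt{2n^2}\}$ together with $\mathcal{B}_3=\{\tfrac{1+i}{\sqrt 2}\mathbf{b}:\mathbf{b}\in B_s\}$ form three mutually unbiased complex bases, since the inner product of any vector of $\mathcal{B}_3$ with any vector of $\mathcal{B}_1$ or $\mathcal{B}_2$ is $\tfrac{1-i}{2n}$ times an entry of $H$ or $K$ and therefore lies in $\Cset$ by the same identity as above. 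For $n=1$ the example pair of order $2$ in the excerpt suffices, and for $n=3$ the pair in Table \ref{matC18} is automatically special because $18=3^2+3^2$ is the only representation of $18$ as a sum of two squares. The main obstacle is the factorization step $U_j=\tfrac{1+i}{2n}H_j$: the whole reduction hinges on $\Cset$ being a single $\{\pm 1,\pm i\}$-orbit of the generator $\tfrac{1+i}{2n}$, so that $H_j$ really is a $\{\pm 1,\pm i\}$-matrix of precisely the size needed to match the modulus $\sqrt{2n^2}$ required by mutual unbiasedness.
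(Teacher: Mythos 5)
Your proposal is correct and follows essentially the same route as the paper: your coordinate matrices $U_j=\tfrac{1+i}{2n}H_j$ are exactly the paper's products $H_jH_1^*$ rescaled by $\tfrac{2n}{1+i}$, and both arguments then invoke Theorem \ref{theorem:numberMUCHtwo} with the odd parameter $n^2$ and settle equality via the special pairs of orders $2$ and $18$ together with the scaled standard basis. You merely spell out the modulus computations and the role of $\Cset=\tfrac{1+i}{2n}\{\pm 1,\pm i\}$ that the paper leaves implicit.
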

\begin{proof}
Let $\B_1, \B_2,\B_3$ be three mutually unbiased complex bases for
$\mathbb{C}^{2n^2}$. Let $H_i$ be the matrix formed by putting the
vectors of $\B_i$ as the rows of $H_i$, $i=1,2,3$. Then
$\frac{2n}{1+i}H_2H_1^*$ and $\frac{2n}{1+i}H_3H_1^*$ form a pair
of unbiased complex Hadamard matrices of order $2n^2$. Thus, it
follows from Theorem \ref{theorem:numberMUCHtwo} that $|{\bf
MUCB}(2n^2)|-1\le 2$. The equality occurs for $n=1,3$ as there are
pair of  special unbiased complex Hadamard matrices of order $2$
and $18$.
\end{proof}
Two orthonormal bases ${\B}_1$ and $\B_2$ for ${\mathbb{R}}^n$ are
called {\it mutually unbiased real bases} if\\ $\langle
u,v\rangle \in\{\frac{1}{\sqrt n},-\frac{1}{\sqrt n}\}$ for all $u\in {\B}_1$ and $v\in
{\B}_2$, where $\langle, \rangle$ is the standard Euclidean inner
product in $\mathbb{R}^n$, see \cite{bstw} for details. We will
use $|{\bf MURB}(n)|$ to denote the number of elements in a set of
mutually unbiased real bases in ${\mathbb{R}}^n$.


\begin{lemma} $|{\bf MURB}(4n^2)|\le 3$ for any odd integer $n$. Equality is
attained for $n=1,3$.
\end{lemma}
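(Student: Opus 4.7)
The plan is to mimic the proof of the previous lemma, translating the statement about MURB into a statement about pairs of unbiased real Hadamard matrices, and then invoking the bound of two noted in Section 3. For the upper bound, I would assume toward a contradiction that $\B_1,\B_2,\B_3,\B_4$ are four mutually unbiased real bases for $\mathbb{R}^{4n^2}$. Since orthogonal transformations preserve both orthonormality and inner products, I may apply one and assume without loss of generality that $\B_1$ is the standard basis $B_s$. The condition that $\B_i$ is unbiased with $B_s$ (for $i=2,3,4$) forces every coordinate of every vector of $\B_i$ to have absolute value $\tfrac{1}{\sqrt{4n^2}}=\tfrac{1}{2n}$.

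Letting $H_i$ be the matrix whose rows are the vectors of $\B_i$, the matrix $M_i=2nH_i$ is then a $\pm1$-matrix. Because $\B_i$ is orthonormal, $M_iM_i^t=(2n)^2 H_iH_i^t=4n^2\,I$, so each $M_i$ is a Hadamard matrix of order $4n^2$. Moreover, the mutual unbiasedness of $\B_i$ and $\B_j$ for $i,j\in\{2,3,4\}$ says exactly that the entries of $H_iH_j^t$ have absolute value $\tfrac{1}{2n}$, hence $M_iM_j^t$ has entries of absolute value $2n=\sqrt{4n^2}$. Thus $M_2,M_3,M_4$ form three mutually unbiased real Hadamard matrices of order $4n^2$ with $n$ odd. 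This contradicts the bound of two mutually unbiased Hadamard matrices for such orders, which follows from Lemma \ref{w(n)} together with Remark 8 (order $4n^2$ with $n$ odd is of the form $4m$, $m$ an odd square, and unbiased Hadamard matrices of this order are automatically weakly unbiased).

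For equality, I would exhibit three MURB in the two cases. When $n=1$, take the standard basis of $\mathbb{R}^4$ together with the two orthonormal bases obtained by normalizing (dividing by $2$) a classical pair of unbiased Hadamard matrices of order $4$; pairwise unbiasedness between the two Hadamard bases is built in, and unbiasedness with $B_s$ is automatic since each normalized row has entries $\pm\tfrac12=\pm\tfrac{1}{\sqrt4}$. When $n=3$, apply the same recipe to the pair of unbiased real Hadamard matrices of order $36$ furnished by Corollary \ref{36}, normalizing by $6$, and adjoin the standard basis of $\mathbb{R}^{36}$.

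There is no real obstacle here; the content of the lemma is a routine translation via the well-known dictionary between a MURB set containing the standard basis and a set of mutually unbiased Hadamard matrices. The only point that deserves a line of justification is the reduction to the case where $B_s$ is one of the bases, which is accomplished by an orthogonal change of coordinates that leaves every Euclidean inner product invariant.
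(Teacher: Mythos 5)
Your proof is correct and takes essentially the same approach as the paper: the paper forms the Hadamard matrices $2nH_iH_1^t$, which is exactly your reduction to the case $\B_1=B_s$ by an orthogonal change of coordinates, and then both arguments invoke Lemma \ref{w(n)} (via the observation that unbiased Hadamard matrices of order $4n^2$, $n$ odd, are weakly unbiased) for the bound and Corollary \ref{36} for equality at $n=3$.
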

\begin{proof}
Let $\B_1, \B_2,\B_3$ be three mutually unbiased real bases for
$\mathbb{R}^{4n^2}$. Let $H_i$ be the matrix formed by putting the
vectors of $\B_i$ as the rows of $H_i$, $i=1,2,3$. Then
$2nH_2H_1^t$ and $2nH_3H_1^t$ form a pair
of unbiased Hadamard matrices of order $4n^2$. The result now follows from Lemma \ref{w(n)} and Corollary \ref{36}. See also Observation 2.1 of
\cite {bstw}.
\end{proof}

{\bf Acknowledgments:} The authors wish to acknowledge, with appreciation, two long and detailed reports by an anonymous referee which
improved the presentation of this paper considerably. Thanks are also extended to
Professor Holzmann for his help.

\end{document}